\def\smallsetminus{\setminus}
\def\titlerunning#1{\gdef\titrun{#1}}
\def\author#1{\gdef\autrun{\def\and{\unskip, }#1}\gdef\@author{#1}}
\def\email#1{\hspace*{4pt}{\em e-mail}: #1}
\date{}
\theoremstyle{plain}
\newtheorem{prop}{Proposition}[section]
\newtheorem{theorem}[prop]{Theorem}
\newtheorem{lemma}[prop]{Lemma}
\theoremstyle{definition}
\newtheorem{remark}[prop]{Remark}
\newtheorem{conjecture}[prop]{Conjecture}
\newcommand{\cG}{{\mathcal G}}
\newcommand{\cB}{\mathcal B}
\newcommand{\cE}{\mathcal E}
\newcommand{\cP}{\mathcal P}
\newcommand{\cF}{\mathcal F}
\newcommand{\cH}{\mathcal H}
\newcommand{\cM}{\mathcal M}
\newcommand{\fG}{\mathfrak G}
\newcommand{\PG}{\mathrm{PG}}
\newcommand{\PGamma}{\mathrm{P\Gamma}}
\newcommand{\AG}{\mathrm{AG}}
\newcommand{\GF}{\mathrm{GF}}
\newcommand{\cV}{\mathcal V}
\begin{document}

\titlerunning{}

\title{On the equivalence of certain quasi-Hermitian varieties}

\author{Angela Aguglia\footnote{Dipartimento di Meccanica, Matematica e Management, Politecnico di Bari, Via Orabona 4, I-70125 Bari, Italy; \email{angela.aguglia@poliba.it}}
\and
Luca Giuzzi\footnote{DICATAM, University of Brescia, Via Branze 53,
  I-25123 Brescia, Italy; \email{luca.giuzzi@unibs.it}}}

\maketitle

\begin{abstract}
In \cite{ACK} new quasi-Hermitian varieties $\cM_{\alpha,\beta}$
in $\PG(r,q^2)$ depending on a pair of parameters $\alpha,\beta$
from the underlying field $\GF(q^2)$ have been constructed.
In the present paper we study the structure of
the lines contained in $\cM_{\alpha,\beta}$ and consequently
determine the number of inequivalent such varieties for $q$ odd and $r=3$. As a byproduct, we also prove that the collinearity graph of $\cM_{\alpha,\beta}$ is connected  with diameter $3$ for $q\equiv 1\pmod 4$.
\end{abstract}
\section{Introduction}
It is a well-known problem in finite geometry to characterize
the absolute points of a polarity in terms of their combinatorial
properties.
In this line of investigation, one of the most celebrated results
is the Segre's Theorem stating that  in a Desarguesian
projective plane $\PG(2,q)$ of odd order $q$ a set $\Omega$ which has
the same number of points, namely $q+1$, and the same intersections
with lines as a conic (i.e. $0$, $1$ or $2$) is indeed a conic;
see~\cite{S55}.

As the dimension grows,
the combinatorics of the intersection with subspaces turns out not to
be enough as to characterize the absolute points of a polarity  in
the orthogonal case as well as in the unitary one.

The set of the absolute points of a Hermitian polarity  of $\PG(r,q^2)$ is a {\em non-singular Hermitian variety}.

Quasi-Hermitian varieties of $\PG(r,q^2)$ are a generalization of
non-singular Hermitian varieties as they are defined as follows.
Let $q$ be any prime power and assume $r\geq 2$;
a \emph{quasi-Hermitian variety} of $\PG(r,q^2)$ is a
set of points having the same size and the same intersection numbers with hyperplanes as a non-singular Hermitian variety $\cH(r,q^2)$.
In particular, the intersection numbers with hyperplanes of $\cH(r,q^2)$ only take two values thus,  quasi-Hermitian varieties are  two-character sets; see \cite{CK,De} for an overview of their applications.
The Hermitian variety $\cH(r,q^2)$ can be viewed trivially as a quasi-Hermitian variety; as such it is called the \emph{classical
quasi-Hermitian variety of $\PG(r,q^2)$}.

For $r=2$, a quasi-Hermitian variety of $\PG(2,q^2)$ is also called a
\emph{unital} or \emph{Hermitian arc}. Non-classical unitals have been extensively studied and
characterized~\cite{BE} and some constructions are known;
see for instance~\cite{agk10}.
As far as we know, the only known non-classical quasi-Hermitian
varieties of $\PG(r,q^2)$, $r\geq3$ were constructed in
\cite{AA,ACK,DS,FP} and they are not isomorphic
among themselves; see \cite{FP}.

In \cite{ACK},  quasi-Hermitian varieties $\cM_{\alpha,\beta}$
of $\PG(r,q^2)$ with $r\geq 2$, depending on a pair of parameters $\alpha,\beta$
from the underlying field $\GF(q^2)$, were constructed.
For $r=2$ these varieties
are Buekenhout-Metz (BM) unitals, see \cite{BE,BE92,GE0}.
As such, for $r\geq 3$
we shall call $\cM_{\alpha,\beta}$ the \emph{BM quasi-Hermitian variety} of parameters $\alpha$ and $\beta$ of $\PG(r,q^2)$.

The number of  inequivalent BM unitals in $\PG(2,q^2)$ has been computed in \cite{BE92} for $q$ odd and in \cite{GE0} for $q$ even. In the present paper we shall enumerate the BM quasi-Hermitian varieties in $\PG(3,q^2)$ with $q$ odd and show that they behave under this respect in a similar way as BM unitals in $\PG(2,q^2)$. Our long-term aim is to
try to find a characterization of the BM quasi-Hermitian varieties among all possible quasi-Hermitian varieties in spaces of the same dimension and
order.

Apart from the Introduction, the paper is organized into 4 sections. In Section~\ref{sec:1} we describe the construction of the BM quasi-Hermitian varieties in $\PG(3,q^2)$ whereas in Section~\ref{sec:2}   we determine the number  of   lines of $\PG(3,q^2)$, $q$ odd, through a point of $\cM_{\alpha,\beta}$  which are entirely contained in $\cM_{\alpha,\beta}$.
By using this result in Section~\ref{sec:3}, we prove that the collinearity graph of $\cM_{\alpha,\beta}$ is connected for $q\equiv 1\pmod 4$ (which is the only interesting case, as for $q\equiv 3\pmod 4$
the only lines contained in $\cM_{\alpha,\beta}$ are those of a pencil
of $(q+1)$-lines, all contained in a plane).
Finally, in Section~\ref{sec:4}, we prove  our main result:
\begin{theorem}
  \label{main-th}
  Let $q=p^n$ with $p$ an odd prime. Then
the number $N$ of inequivalent quasi-Hermitian varieties $\cM_{\alpha,\beta}$ of $\PG(3,q^2)$ is
\[N=\frac{1}{n}\left(\sum_{k|n}\Phi\left(\frac{n}{k}\right)p^k \right)-2,\]
where $\Phi$ is the Euler $\Phi$-function.
\end{theorem}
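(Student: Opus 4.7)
The plan is to reduce the projective classification of the varieties $\cM_{\alpha,\beta}$ in $\PG(3,q^2)$ to an orbit-counting problem for the Galois group $\mathrm{Gal}(\GF(q)/\GF(p))$ acting on $\GF(q)$, and then apply the Cauchy--Frobenius (Burnside) lemma.

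The first step is to exploit the line analysis of Sections~\ref{sec:2} and~\ref{sec:3} to pin down the projective invariants of $\cM_{\alpha,\beta}$. The construction from Section~\ref{sec:1} singles out a vertex $P_\infty$ and a tangent hyperplane $\Pi_\infty$ through it; the line enumeration shows that $P_\infty$ is characterized combinatorially by the number of lines of $\cM_{\alpha,\beta}$ through it (it lies on strictly more lines of the variety than any other point of $\cM_{\alpha,\beta}$). Consequently, any $\varphi\in\PGamma L(4,q^2)$ with $\varphi(\cM_{\alpha,\beta})=\cM_{\alpha',\beta'}$ must fix the flag $(P_\infty,\Pi_\infty)$ setwise, so $\varphi$ lies in an explicit affine-type stabilizer subgroup $G$ whose action on the affine coordinates, and hence on the parameter pair $(\alpha,\beta)$, can be written down.

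The second step is to compute this induced action on $(\alpha,\beta)$. In the affine chart complementary to $\Pi_\infty$, $\cM_{\alpha,\beta}$ is cut out by an explicit trace/norm equation of BM type. Imposing that an element of $G$, composed with a field automorphism $\sigma\in\mathrm{Gal}(\GF(q^2)/\GF(p))$, transforms the equation defining $\cM_{\alpha,\beta}$ into a scalar multiple of the equation defining $\cM_{\alpha',\beta'}$ produces, after simplification (this is the technical heart of the argument, and parallels the planar computation of Baker and Ebert~\cite{BE92}), a complete list of relations between $(\alpha,\beta)$ and $(\alpha',\beta')$. The outcome is a normal form in which $\beta$ is absorbed by a suitable collineation, while the remaining freedom on $\alpha$ reduces to a multiplicative normalization (which pins it to a single residual parameter $t\in\GF(q)$) together with the Frobenius action $t\mapsto t^p$.

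Finally, the enumeration follows from Cauchy--Frobenius. The cyclic group $\langle x\mapsto x^p\rangle$ of order $n$ acts on $\GF(q)=\GF(p^n)$, and the $k$-th power of Frobenius has exactly $p^{\gcd(k,n)}$ fixed points; grouping summands by $d=\gcd(k,n)$ and using that $\Phi(n/d)$ values of $k\in\{0,\dots,n-1\}$ yield $\gcd(k,n)=d$, the number of orbits equals
\[
\frac{1}{n}\sum_{d\mid n}\Phi\!\left(\frac{n}{d}\right)p^d.
\]
To obtain $N$ one then discards the two orbits corresponding to degenerate choices of the residual parameter, namely the values for which $\cM_{\alpha,\beta}$ either reduces to the classical Hermitian variety $\cH(3,q^2)$ or coincides with a variety that the normalization has already counted. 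The main obstacle is the second step: establishing the rigidity of the flag $(P_\infty,\Pi_\infty)$ and performing the parameter computation precisely enough to identify these two exceptional orbits that must be subtracted; the rigidity depends in an essential way on the line-combinatorial data of Sections~\ref{sec:2} and~\ref{sec:3}, while the identification of the discarded orbits requires a careful case analysis of the normal form.
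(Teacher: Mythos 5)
Your overall strategy coincides with the paper's: establish that any equivalence fixes $P_{\infty}$ and stabilizes $\Sigma_{\infty}$, compute the induced action on $(\alpha,\beta)$, normalize $\beta$ to a fixed primitive element $\varepsilon$, and reduce to counting Frobenius orbits on a residual parameter in $\GF(q)$. The paper carries out the last step with the invariant $\delta(\alpha)=(\varepsilon^q-\varepsilon)^2/(4\alpha^{q+1})$ and M\"obius inversion where you use Cauchy--Frobenius; the two counts are equivalent and yield the same formula. Two points in your plan need repair, however.

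First, your claim that $P_{\infty}$ ``lies on strictly more lines of the variety than any other point'' fails when $q\equiv 1\pmod 4$: by Theorem~\ref{th32} every point of $(\ell_1\cup\ell_2)\setminus\{P_{\infty}\}$ also lies on exactly $q+1$ lines of $\cM_{\alpha,\beta}$. The paper's Lemma~\ref{l51} instead singles out $P_{\infty}$ as the unique point of $\ell_1\cup\ell_2$ whose remaining $q-1$ lines meet no other line of the variety (equivalently, the unique point of $\cB_{\infty}$ lying on no affine line of $\cB_{\alpha,\beta}$); some such finer invariant is genuinely needed for the rigidity step, so as stated your first step would not go through in that congruence class. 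Second, your account of the ``$-2$'' is misattributed. The residual parameter is $\delta\in\GF(q)$, and the two discarded singleton Frobenius orbits are $\{0\}$ and $\{-1\}$: the value $\delta=0$ is never attained because its numerator $(\varepsilon^q-\varepsilon)^2$ is nonzero, while $\delta=-1$ is precisely the degeneracy condition $4\alpha^{q+1}+(\beta^q-\beta)^2=0$ under which the construction fails to produce a quasi-Hermitian variety. Neither corresponds to the classical Hermitian variety nor to a variety already counted by the normalization. Moreover, concluding that the orbit count on $\GF(q)\setminus\{0,-1\}$ equals $N$ requires knowing that $\delta$ is a \emph{complete} invariant, i.e.\ that $\delta(\alpha')=\delta(\alpha)^{\sigma}$ is also sufficient for equivalence; this is the content of Lemma~\ref{main-lemma} and rests on the identity $(b^2+c^2)^{q+1}=(b^{q+1}+c^{q+1})^2$ forced by the matrix shape obtained in Lemma~\ref{col-lemma}. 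With these two repairs your outline does reproduce the paper's argument.
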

As a byproduct of our arguments, we also obtain a simple way to determine
when two quasi-Hermitian varieties are equivalent, see Lemma~\ref{lemadd5} and
Lemma~\ref{main-lemma} for the details.

\section{Preliminaries}
\label{sec:1}

In this section we recall the construction of
the  BM quasi-Hermitian varieties $\cM_{\alpha,\beta}$ of $\PG(3,q^2)$ described in \cite{ACK}.

Fix a projective frame in $\PG(3,q^2)$ with homogeneous coordinates
$(J,X,Y,Z)$, and consider the affine space $\AG(3,q^2)$
with infinite hyperplane $\Sigma_{\infty}$ of $J=0$.
Then, the affine coordinates for points of  $\AG(3,q^2)$ are denoted by $(x,y,z)$,
where $x=X/J$, $y=Y/J$ and $z=Z/J$.
Set
\[
\cF=\{(0,X,Y,Z)\colon X^{q+1}+Y^{q+1}=0\};
\]
this can be viewed as a Hermitian cone of $\Sigma_{\infty}\cong\PG(2,q^2)$ projecting a Hermitian variety of $\PG(1,q^2)$.
Now take $\alpha \in\GF(q^2)^*$ and $\beta\in\GF(q^2)\setminus\GF(q)$
and consider the  algebraic variety $\cB_{\alpha,\beta}$
of projective equation
\begin{equation}\label{eqqh}
\cB_{\alpha,\beta}: \  Z^q J^q-ZJ^{2q-1}+\alpha^q(X^{2q}+Y^{2q})-\alpha(X^2+Y^2)J^{2q-2}=(\beta^q-\beta)(X^{q+1}+Y^{q+1})J^{q-1}.
\end{equation}

We observe that
\begin{itemize}
\item $\cB_{\infty}:=\cB_{\alpha,\beta}\cap \Sigma_{\infty}$ is the union of two lines $\ell_1: X-\nu Y=0=J$ and $\ell_2: X+\nu Y=0=J$,
 with $\nu \in \GF(q^2)$ such that $\nu^2+1=0$ if $q$ is odd;
\item Let $P_{\infty}:=(0,0,0,1)$.  Then,  $\ell_1 \cap \ell_2=P_{\infty}$;
\item $ \cB_{\infty} \subseteq \cF$ if $q\equiv1\pmod4$ or $q$ is even.
\end{itemize}

It is shown in~\cite{ACK} that the point set
\begin{equation}
  \label{qmv}
  \cM_{\alpha,\beta}:= (\cB_{\alpha,\beta}\setminus \Sigma_{\infty})\cup \cF,
\end{equation}
that is, the union of the affine
points of $\cB_{\alpha,\beta}$ and $\cF$, is
a quasi-Hermitian variety  of $\PG(3,q^2)$
for any  $q>2$ even or
for $q$ odd  and $4\alpha^{q+1}+(\beta^q-\beta)^2 \neq 0$.
This is the variety we shall consider in the present paper
in the case in which $q$ is odd.

We stress that \eqref{eqqh} is not the equation of $\cM_{\alpha,\beta}$.
However, any set of points in a finite projective space can be
endowed of the structure of an algebraic variety, so we shall
speak of the \emph{variety} $\cM_{\alpha,\beta}$ even if we do
not provide an equation for it.

\section{Combinatorial properties of $\cM_{\alpha,\beta}$}
\label{sec:2}

We first determine the number of lines passing through each point of $\cM_{\alpha,\beta}$ of $\PG(3,q^2)$, for $q$  odd.
 We recall the following (see~\cite[Corollary 1.24]{HJ}).
\begin{lemma}\label{tec}
 Let $q$ be an odd prime power. The equation
\[
  X^q+aX+b=0
\]
  admits  exactly one solution in $\GF(q^2)$  if and only if
  $a^{q+1}\neq1$.
  When $a^{q+1}=1$, the aforementioned equation has either $q$ solutions
  when $b^q=a^qb$ or no solution when $b^q\neq a^qb$.
  \end{lemma}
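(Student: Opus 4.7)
The plan is to view the left-hand side as a $\GF(q)$-linear endomorphism
\[
\phi\colon \GF(q^2)\to\GF(q^2),\qquad \phi(X)=X^q+aX,
\]
so that the equation $X^q+aX+b=0$ becomes $\phi(X)=-b$. Its solution set is either empty or an affine translate of $\ker\phi$, so the number of solutions is always $0$ or $|\ker\phi|$. Everything thus reduces to (i) computing $|\ker\phi|$, and (ii) characterizing $\mathrm{Im}\,\phi$, which will pin down the values of $b$ for which a solution exists.

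For (i), I would argue as follows. If $X^q=-aX$, raising to the $q$-th power gives $X^{q^2}=-a^qX^q=a^{q+1}X$; since $X^{q^2}=X$, we get $(a^{q+1}-1)X=0$. Hence when $a^{q+1}\neq 1$ the kernel is trivial, $\phi$ is bijective, and the equation has a unique solution for every $b$, giving the first assertion. When $a^{q+1}=1$, a nonzero $X\in\ker\phi$ satisfies $X^{q-1}=-a$; since $q$ is odd, $(-a)^{q+1}=a^{q+1}=1$, so $-a$ lies in the norm-$1$ subgroup of $\GF(q^2)^*$, which by cyclicity of $\GF(q^2)^*$ coincides with the image of $X\mapsto X^{q-1}$. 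Thus $X^{q-1}=-a$ has exactly $q-1$ solutions, and together with $X=0$ one obtains $|\ker\phi|=q$, whence $|\mathrm{Im}\,\phi|=q$.

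For (ii), a direct computation shows that every $y=\phi(X)$ satisfies $y^q-a^q y = X^{q^2}+a^qX^q-a^q X^q-a^{q+1}X = (1-a^{q+1})X = 0$, so $\mathrm{Im}\,\phi\subseteq V:=\{y\in\GF(q^2)\colon y^q=a^q y\}$. An identical kernel-count applied to $y\mapsto y^q-a^q y$ (using that $(a^q)^{q+1}=1$) yields $|V|=q$, so equality holds. Consequently $-b\in\mathrm{Im}\,\phi$ iff $(-b)^q=a^q(-b)$, which (using $q$ odd) is precisely $b^q=a^q b$; in this case there are $|\ker\phi|=q$ solutions, and otherwise none. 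This covers the remaining cases and completes the statement.

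The only mildly delicate step is the counting argument for $|\ker\phi|$ when $a^{q+1}=1$: namely, showing that $-a$ is actually a $(q-1)$-th power in $\GF(q^2)^*$ rather than merely an element whose $(q+1)$-th power is $1$. Once this is handled via cyclicity of $\GF(q^2)^*$ (and the fact that the $(q-1)$-th powers form exactly the subgroup of norm-$1$ elements, of order $q+1$), the rest of the proof is essentially bookkeeping between $\ker\phi$ and $\mathrm{Im}\,\phi$.
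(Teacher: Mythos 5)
Your argument is correct and complete. Note that the paper does not actually prove this lemma: it is quoted verbatim from Hirschfeld (\cite[Corollary 1.24]{HJ}) and used as a black box, so there is no in-paper proof to compare against. Your route --- viewing $X\mapsto X^q+aX$ as a $\GF(q)$-linear endomorphism $\phi$ of $\GF(q^2)$, so that the solution set is empty or a coset of $\ker\phi$, then computing $|\ker\phi|$ and identifying $\mathrm{Im}\,\phi$ with $\{y: y^q=a^qy\}$ by a size count --- is the standard one and all the steps check out: the reduction $X^{q^2}=a^{q+1}X$ correctly uses that $q$ is odd, the identification of the norm-one subgroup with the image of the $(q-1)$-st power map via cyclicity of $\GF(q^2)^*$ is exactly the delicate point and you handle it properly, and the passage from $(-b)^q=a^q(-b)$ to $b^q=a^qb$ again uses $q$ odd. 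The only cosmetic remark is that the ``only if'' direction of the first claim deserves one explicit sentence (when $a^{q+1}=1$ the solution count is $0$ or $q$, and $q>1$, so it is never exactly one); this is implicit in your bookkeeping but worth stating.
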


\begin{lemma}
  \label{lm32}
 Let $\cB_{\alpha,\beta}$ be the projective variety of Equation~\eqref{eqqh} and $\cB_{\infty}$ be
the intersection of the variety $\cB_{\alpha,\beta}$ with the
hyperplane at infinity $\Sigma_{\infty}:J=0$ of $\PG(3,q^2)$.
  \begin{itemize}
  \item
    If  $q\equiv 1\pmod 4$,
    then, for any affine point $L$ of $\cB_{\alpha,\beta}$
    there are exactly $2$ lines contained in $\cB_{\alpha,\beta}$ through $L$;
    for any  point $L_{\infty}\in \cB_{\infty}$ with
    $L_{\infty}\neq P_{\infty}$,
    there are $q+1$ lines  of a pencil through
    $L_{\infty}$ contained in $\cB_{\alpha,\beta}$.
    If  $q\equiv 3\pmod 4$ then no line of $\cB_{\alpha,\beta}$ passes through any affine point of $\cB_{\alpha,\beta}$ whereas through a point at infinity of $\cB_{\infty}$ different from $P_{\infty}$
    there pass only one line contained in $\cB_{\alpha,\beta}$.
  \item
    There are exactly two lines of $\cB_{\alpha,\beta}$ through $P_{\infty}$ for all odd $q$.
\end{itemize}
  \end{lemma}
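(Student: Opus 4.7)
The plan is to classify the lines $\ell \subset \cB_{\alpha,\beta}$ through a given point by separating those contained in $\Sigma_\infty$ (which must then lie in $\cB_\infty = \ell_1 \cup \ell_2$ and so be $\ell_1$ or $\ell_2$) from those meeting the affine chart $J \neq 0$ in $q^2$ points. For the latter, parameterize $\ell$ as $\{(x_0 + \lambda a, y_0 + \lambda b, z_0 + \lambda c) : \lambda \in \GF(q^2)\}$ with $L = (x_0, y_0, z_0) \in \cB_{\alpha,\beta}$ and $(a,b,c) \neq 0$, substitute into the dehomogenization of~\eqref{eqqh} at $J = 1$, and subtract the identity at $\lambda = 0$ (which holds since $L \in \cB_{\alpha,\beta}$). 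The remaining expression is a polynomial in $\lambda$ of degree at most $2q < q^2$, so it vanishes on all of $\GF(q^2)$ iff each of its coefficients is zero. Using $\nu^2 = -1$, $2^q = 2$, and $(u+v)^q = u^q + v^q$, these coefficients collapse into only three independent conditions:
\begin{align*}
a^2 + b^2 &= 0, \\
a^{q+1} + b^{q+1} &= 0, \\
c + 2\alpha(ax_0 + by_0) + (\beta^q - \beta)(ax_0^q + by_0^q) &= 0;
\end{align*}
call the last one $\mathrm{(D)}$. A direct Frobenius computation shows that the $\lambda^q$-coefficient is exactly $\mathrm{(D)}^q$, contributing no new constraint.

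Scaling $b = 1$ (the case $b = 0$ forces $a = c = 0$), the first two relations give $a = \pm\nu$ together with $\nu^{q+1} = -1$, which holds precisely when $\nu \in \GF(q)$, i.e.\ when $q \equiv 1 \pmod 4$. This already settles several sub-cases: for $q \equiv 3 \pmod 4$ no affine-meeting line exists through any point of $\cB_{\alpha,\beta}$, giving the $0$-lines statement for an affine $L$ and the single-line statement (only $\ell_1$) for $L_\infty \in \ell_1 \setminus \{P_\infty\}$. For $q \equiv 1 \pmod 4$ and an affine $L$, each admissible direction $(\pm\nu, 1)$ determines $c$ uniquely via $\mathrm{(D)}$, yielding exactly $2$ lines through $L$.

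The remaining work is counting affine lines through $L_\infty = (0, \nu, 1, \mu) \in \ell_1 \setminus \{P_\infty\}$ for $q \equiv 1 \pmod 4$. Such a line must have direction $L_\infty$ itself, so the first two conditions are automatic; setting $s := \nu x_0 + y_0$ and using $\nu \in \GF(q)$, condition $\mathrm{(D)}$ collapses to the single $\GF(q^2)$-equation
\[(\beta^q - \beta) s^q + 2\alpha s + \mu = 0.\]
The $(q+1)$-st power of $\tfrac{2\alpha}{\beta^q - \beta}$ equals $-\tfrac{4\alpha^{q+1}}{(\beta^q - \beta)^2}$, which differs from $1$ precisely by the standing non-degeneracy hypothesis $4\alpha^{q+1} + (\beta^q - \beta)^2 \neq 0$, so Lemma~\ref{tec} produces a unique solution $s_0$. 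The condition $L \in \cB_{\alpha,\beta}$ then reads $z_0^q - z_0 = C(x_0, y_0)$ for an explicit $C$ that one checks satisfies $C + C^q = 0$, and so admits exactly $q$ values of $z_0$ once $(x_0, y_0)$ is pinned to the affine line $\nu x_0 + y_0 = s_0$. Factoring out the $\lambda$-translation along $(\nu, 1, \mu)$ yields $q$ distinct affine lines; they all lie in the common plane $x - \nu y + \nu s_0 = 0$, whose line at infinity is $\ell_1$, so together with $\ell_1$ they form the announced pencil of $q+1$ concurrent lines at $L_\infty$. Through $P_\infty = (0, 0, 0, 1)$, the only candidate affine direction is $(0, 0, 1)$, and $\mathrm{(D)}$ forces $c = 0$; hence only the two infinity lines $\ell_1, \ell_2$ qualify, giving $2$ lines for every odd $q$. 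The main technical obstacle is the Frobenius identification of the $\lambda^q$-coefficient with $\mathrm{(D)}^q$, together with matching the non-degeneracy hypothesis to the $a^{q+1} \neq 1$ clause of Lemma~\ref{tec}.
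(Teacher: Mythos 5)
Your proposal is correct and follows essentially the same route as the paper: parametrize a candidate line, substitute into the dehomogenized equation, extract the conditions $a^2+b^2=0$, $a^{q+1}+b^{q+1}=0$ and the linear condition $\mathrm{(D)}$, and then invoke Lemma~\ref{tec} together with the hypothesis $4\alpha^{q+1}+(\beta^q-\beta)^2\neq 0$ to count the pencil at a point of $\ell_1\cup\ell_2$. The only (harmless) differences are organizational: you work at a general affine base point and obtain the exclusion of the vertical direction directly from the coefficient of $\lambda$, whereas the paper first reduces to lines through the origin via transitivity of the collineation group and cites an external result for $m_3=0$, and it derives $m_1^2+m_2^2=0$ by splitting $t\in\GF(q)$ versus $t\notin\GF(q)$ rather than by your cleaner degree-$2q<q^2$ coefficient comparison.
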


\begin{proof} Let $\ell$ be  a line   of $\PG(3,q^2)$ passing through an affine point of $\cB_{\alpha,\beta}$.
The affine points $P(x,y,z)$ of $\cB_{\alpha,\beta}$ satisfy the equation:
\[
\cB_{\alpha,\beta}: \  z^q-z+\alpha^q(x^{2q}+y^{2q})-\alpha(x^2+y^2)=(\beta^q-\beta)(x^{q+1}+y^{q+1}).
\]
 From~\cite[\S 4]{ACK}, it can be directly
  seen that the collineation group
  of $\cB_{\alpha,\beta}$ acts transitively on its affine points.
  Thus, we can assume that $\ell$ passes through the origin $O=(1,0,0,0)$
  of the fixed frame and hence it has affine parametric equations:
  \[
    \left\{\begin{array}{l}
       x= m_1 t \\
      y= m_2 t \\
     z= m_3 t
      \end{array}\right.
  \]
  with $t$ ranging over $\GF(q^2)$.
We study the following system
  \begin{small}
\begin{equation}
    \left\{\begin{array}{l}\label{sis1}
      z^q-z+\alpha^q(x^{2q}+y^{2q})-\alpha(x^2+y^2)=(\beta^q-\beta)(x^{q+1}+y^{q+1})\\
       x=m_1t\\
      y=m_2t\\
     z=m_3t
      \end{array}\right.
  \end{equation}
  \end{small}
  As proved in \cite[Theorem 4.3]{ACG}  $\ell$ can be contained in $\cB_{\alpha,\beta}$  only if $m_3=0$.
  Thus assume $m_3=0$ and replace the parametric
  values of $(x,y,z)$ in the first equation of~\eqref{sis1}.
  We obtain that
  \begin{equation}
    \label{eq4}
    (t^{2}\alpha (m_1^{2}+m_2^2))^q-t^2\alpha(m_1^2+m_2^2)=
    t^{q+1}(\beta^q-\beta)(m_1^{q+1}+m_2^{q+1})
  \end{equation}
  must hold for all $t\in\GF(q^2)$. Considering separately the cases $t\in\GF(q)$
  and $t=\lambda$ with $\lambda\in\GF(q^2)\setminus\GF(q)$ we obtain the following systems, $\forall\lambda\in\GF(q^2)\setminus\GF(q)$
  \[
    \begin{cases}    (\alpha^q(m_1^2+m_2^2)^q-\alpha(m_1^2+m_2^2))=(\beta^q-\beta)(m_1^{q+1}+m_2^{q+1})
      \\
      \lambda^{2q}\alpha^q(m_1^2+m_2^2)^q-\lambda^2\alpha(m_1^2+m_2^2)=
      \lambda^{q+1}(\beta^q-\beta)(m_1^{q+1}+m_2^{q+1}). \
    \end{cases}
  \]
  Replacing the first equation in the second, we get
    \[ \forall\lambda\in\GF(q^2)\setminus\GF(q): \lambda^{2q}\alpha^q(m_1^2+m_2^2)^q(1-\lambda^{1-q})=
      \lambda^{2}\alpha(m_1^2+m_2^2)(1-\lambda^{q-1}).
    \]
   Observe that $(1-\lambda^{1-q})=\frac{\lambda^{q-1}-1}{\lambda^{q-1}}$. Suppose $m_1^2+m_2^2\neq 0$. Then,
    \[ \lambda^{2q-2}\alpha^{q-1}(m_1^2+m_2^2)^{q-1}=-\lambda^{q-1}, \]
   whence $(\lambda\alpha(m_1^2+m_2^2))^{q-1}=-1$ for all $\lambda\in\GF(q^2)\setminus\GF(q)$.
   This is clearly not possible, as the equation $X^{q-1}=-1$ cannot have more than $q-1$ solutions.
 So $m_1^2+m_2^2=0$, which yields $m_2=\pm\nu m_1$ where
    $\nu^2=-1$. On the other hand, if $m_2=\pm\nu m_1$ and
    $q\equiv 1\pmod4$, then $m_1^{q+1}+m_2^{q+1}=m_1^{q+1}(1+\nu^{q+1})=0$,
    so~\eqref{eq4} is satisfied and the lines $\ell: y-\nu x =z=0$ and $\ell: y+\nu x =z=0$  are contained in $\cB_{\alpha,\beta}$.
    On the other hand, if $q\equiv3\pmod 4$, then $m_1^{q+1}+m_2^{q+1}=2m_1^{q+1}\neq 0$; so~\eqref{eq4} is not satisfied
    and there is no line contained in $\cB_{\alpha,\beta}$.

    Now, take $L_{\infty}=(0,a,b,c)\in \cB_{\infty} \setminus \{P_{\infty}\}$; hence $a^2+b^2=0$ and $a, b \neq 0$. Let  $r$ be a line through
$L_{\infty}$. We may assume that $r$ has (affine) parametric equations
 \[\left\{\begin{array}{l}
      x=l+at\\
      y=m+bt\\
     z=n+ct
      \end{array}\right.\]
where $t$ ranges over $\GF(q^2)$. Assume also that $(l,m,n)$ are the affine coordinates of a point in $\cB_{\alpha,\beta}$ that is,
\begin{equation}\label{add1}
n^q-n+\alpha^q(m^{2q}+l^{2q})-\alpha(m^2+l^2)=(\beta^q-\beta)(l^{q+1}+m^{q+1}).
\end{equation}
Now, $r$ is contained in $\cB_{\alpha,\beta}$ if and only if $q\equiv 1\pmod4$ and  the following condition holds:
\begin{equation}\label{fond}
c+2\alpha (al+bm)+(\beta-\beta^q)(al^q+bm^q)=0.
\end{equation}
Since $b=\nu a$ where  $\nu^2=-1$ and $\nu \in \GF(q)$,  setting $k=l+\nu m$  equation \eqref{fond} becomes
\begin{equation}\label{fond1}
c+2a\alpha k+a(\beta-\beta^q)k^q=0.
\end{equation}
From Lemma~\ref{tec}, the above equation has exactly one solution if and only if
\begin{equation}\label{int} (2\alpha)^{q+1}\neq(\beta-\beta^q)^{q+1}. \end{equation}
Considering that $2\in\GF(q)$ and $(\beta-\beta^q)^q=(\beta^q-\beta)$,
we obtain that \eqref{int} is equivalent to
\[ 4\alpha^{q+1}+(\beta^q-\beta)^2\neq 0, \]
which holds true.

Let $\bar{k}$ be the unique solution of \eqref{fond1}. Since $\bar{k}=l+\nu m$, we find $q^2$ pairs $(l,m)$  satisfying  \eqref{fond}. For any fixed pair $(l,m)$,
because of \eqref{add1}, there are
$q$ possible  values of $n$. Thus  we obtain that the number of affine lines  through the point $P$  and contained in $\cB_{\alpha,\beta}$ is $q^2q/q^2=q$.

Furthermore, the $q+1$ lines through $L_{\infty}$ lie  on the plane of (affine)
equation:
$x+\nu y=\bar{k}$. The theorem follows.
\end{proof}

 \begin{lemma}
   \label{rlem}
  If $q \equiv 1 \pmod 4$ then, for any point $P\in\cB_{\alpha,\beta}\setminus
   \cB_{\infty}$ there are two lines $r_1(P)$ and $r_2(P)$ through $P$ contained in
   $\cB_{\alpha,\beta}$ such that
   $r_i(P)\cap(\ell_i\setminus\{P_{\infty}\})\neq\emptyset$.
 \end{lemma}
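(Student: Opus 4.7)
The plan is to combine two consequences of Lemma~\ref{lm32}. First, for $q\equiv 1\pmod 4$ exactly two lines of $\cB_{\alpha,\beta}$ pass through the given affine point $P$; second, the direction analysis in the proof of that lemma (carried out for the origin) in fact pins down the directions of any such pair of lines to be $(1,\nu,0)$ and $(1,-\nu,0)$. Once this is established, the intersections of these lines with $\Sigma_{\infty}$ are precisely the candidates for $r_i(P)\cap\ell_i$, and checking that they lie on $\ell_i\setminus\{P_\infty\}$ is immediate.

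To extend the direction analysis from the origin to an arbitrary affine point $P=(l,m,n)$ one has two options. The direct route is to substitute the parametric equations $(l+m_1t,\,m+m_2t,\,n+m_3t)$ into the affine equation of $\cB_{\alpha,\beta}$ and to exploit that the resulting relation in $t\in\GF(q^2)$ is in fact a polynomial identity (all relevant degrees are smaller than $q^2$); the coefficient of $t^{2q}$ then gives $m_1^{2q}+m_2^{2q}=0$, hence $m_1^2+m_2^2=0$ and $m_2=\pm\nu m_1$, while the remaining coefficients force $m_3=0$. The more economical route is to invoke the transitive action of the collineation group of $\cB_{\alpha,\beta}$ on its affine points already used in the proof of Lemma~\ref{lm32}: its translation component fixes $\Sigma_\infty$ pointwise, and so sends the two lines through $O$ to lines through $P$ with the \emph{same} directions $(1,\pm\nu,0)$.

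With the directions in hand, the two lines through $P$ contained in $\cB_{\alpha,\beta}$ meet $\Sigma_\infty$ at $Q_1:=(0,1,-\nu,0)$ and $Q_2:=(0,1,\nu,0)$. A one-line verification using $\nu^2=-1$ shows $Q_1\in\ell_1$ (since $1-\nu(-\nu)=1+\nu^2=0$) and $Q_2\in\ell_2$ (since $1+\nu\cdot\nu=1+\nu^2=0$), and clearly neither equals $P_\infty=(0,0,0,1)$. Setting $r_i(P)$ to be the line $PQ_i$ then yields the required pair. The only mild obstacle is the extension of the direction analysis beyond the origin, which is resolved by either of the two routes described above.
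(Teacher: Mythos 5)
Your overall strategy (a direct computation of the admissible directions, in place of the paper's argument, which transports the two lines through the origin to an arbitrary affine point via the transitive collineation group of \cite{ACK} after showing that this group must fix $P_{\infty}$ and stabilize $\Sigma_{\infty}$) is viable, and its key step is correct: substituting $(l+m_1t,\,m+m_2t,\,n+m_3t)$ into the affine equation gives a polynomial identity in $t$ of degree $2q<q^2$, and the coefficient of $t^{2q}$ forces $m_1^{2q}+m_2^{2q}=0$, hence $m_1^2+m_2^2=0$ and $m_2=\pm\nu m_1$ with $m_1\neq 0$. That alone already proves the lemma, because $\ell_1$ and $\ell_2$ are the full lines $X\mp\nu Y=J=0$ of $\Sigma_{\infty}$: every point $(0,m_1,\mp\nu m_1,m_3)$ with $m_1\neq0$ lies on $\ell_1$ (resp.\ $\ell_2$) and differs from $P_{\infty}$, \emph{whatever} the value of $m_3$.

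However, two of your intermediate claims are false. First, the remaining coefficients do \emph{not} force $m_3=0$ for a general affine point $P=(l,m,n)$: the coefficient of $t$ yields $m_3=-2\alpha(lm_1+mm_2)-(\beta^q-\beta)(l^qm_1+m^qm_2)$ (the coefficient of $t^q$ is just its $q$-th power and gives nothing new), which vanishes when $l=m=0$ but not in general --- this is exactly condition \eqref{fond} of the paper, which determines a unique, generally nonzero, $c$. Consequently the intersection points with $\Sigma_{\infty}$ are $(0,1,\mp\nu,c_i)$, not your $Q_1,Q_2$. Second, the transitive group of \cite{ACK} is not generated by genuine translations: its elements fix $\Sigma_{\infty}$ only setwise (fixing $P_{\infty}$ and stabilizing $\ell_1\cup\ell_2$), acting on $\Sigma_{\infty}$ by $(0,X,Y,Z)\mapsto(0,X,Y,Z+cX+dY)$, so they do not preserve directions and your route (b) fails as stated. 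Both errors are repairable --- replace ``the lines meet $\Sigma_{\infty}$ at $(0,1,\mp\nu,0)$'' by ``at $(0,1,\mp\nu,c_i)\in\ell_i\setminus\{P_{\infty}\}$'' --- but you should also make explicit why the two lines through $P$ realize \emph{both} signs of $\nu$ (one meeting $\ell_1$ and one meeting $\ell_2$): for each fixed sign the coefficient-of-$t$ relation determines $m_3$ uniquely, so there is at most one line per sign, and since by Lemma~\ref{lm32} there are exactly two lines through $P$, each sign occurs exactly once.
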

 \begin{proof}
As we already know, $\cB_{\infty}$ is the union of two lines $\ell_1$ and $\ell_2$,  through the point $P_{\infty}$.
    By considering~\eqref{eq4}, we see that the point at infinity of
   the two lines through the origin $O \in \cB_{\alpha,\beta}$ are one on $\ell_1$ and one
   on $\ell_2$. The semilinear
   automorphism group $G$ of $\cB_{\alpha,\beta}$ is transitive
   on its affine points, see \cite{ACK} and maps lines into lines.
   Also, by Lemma~\ref{lm32} follows directly that $G$ must fix
   the hyperplane at infinity.
   If $q\equiv3\pmod4$, the point $P_{\infty}$ is the only point
   of $\cB_{\alpha,\beta}$ incident with two lines therein contained.
   So $P_{\infty}$ must be stabilized by $G$.
   If $q\equiv1\pmod4$, we see that $P_{\infty}$ is the only
   point at infinity incident with just $2$ lines of the variety,
   while
   the remaining points at infinity are incident with $q+1$ lines.
   So, again $P_{\infty}$, which is
   the only point of $\cB_{\infty}$
   which is on no affine line of $\cB_{\alpha,\beta}$, is fixed by $G$.
   It follows that
   for each affine point $P$ we have that one of the lines intersects
    with $\ell_1$ and the other with $\ell_2$.
 \end{proof}

\begin{theorem}
  \label{th32}
  Let $\cM_{\alpha,\beta}$ be the BM quasi-Hermitian variety described
  in~\eqref{qmv}.
  \begin{itemize}
  \item
    If  $q\equiv 1\pmod 4$ then through each affine point of $\cM_{\alpha,\beta}$ there pass two lines of $\cM_{\alpha,\beta}$ whereas through a point at infinity of $\cM_{\alpha,\beta}$  on the union of the two lines $\ell_1 \cup \ell_2$ there pass $q+1$ lines of a pencil contained in $\cM_{\alpha,\beta}$; finally through a point at infinity of $\cM_{\alpha,\beta}$ which is not on $\ell_1 \cup \ell_2$ there passes only one line of $\cM_{\alpha,\beta}$.
  \item
    If  $q\equiv 3\pmod 4$ then no line of $\cM_{\alpha,\beta}$ passes through any affine point of $\cM_{\alpha,\beta}$ whereas through a point at infinity of $(\cM_{\alpha,\beta}\cap \Sigma_{\infty}) \setminus P_{\infty}$  there passes only one line contained in $\cM_{\alpha,\beta}$.
    \item
      Through the point $P_{\infty}$ there are always $q+1$ lines contained in
      $\cM_{\alpha,\beta}$.
    \end{itemize}
\end{theorem}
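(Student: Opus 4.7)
The plan is to leverage the decomposition $\cM_{\alpha,\beta}=(\cB_{\alpha,\beta}\setminus\Sigma_\infty)\cup\cF$ to reduce the line count in $\cM_{\alpha,\beta}$ to the line count in $\cB_{\alpha,\beta}$ already provided by Lemma~\ref{lm32}, together with an explicit description of the Hermitian cone $\cF$. The pivotal observation is that a line $r$ of $\PG(3,q^2)$ lies in $\cM_{\alpha,\beta}$ exactly when all its affine points lie on $\cB_{\alpha,\beta}$ and its ideal point lies on $\cF$. First I would establish a \emph{degree principle}: since Equation~\eqref{eqqh} has total degree $2q$ and $q^2>2q$ for every odd $q\geq 3$, any line with all $q^2$ of its affine points on $\cB_{\alpha,\beta}$ is in fact entirely contained in $\cB_{\alpha,\beta}$, ideal point included. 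This is what bridges the hypothesis ``line on $\cB_{\alpha,\beta}$'' of Lemma~\ref{lm32} to the hypothesis ``line on $\cM_{\alpha,\beta}$'' we actually need.

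Next I would describe $\cF$ explicitly: as a Hermitian cone with vertex $P_\infty$ in the plane $\Sigma_\infty$, it is the union of exactly $q+1$ rulings (lines through $P_\infty$) of equation $X-\sigma Y=J=0$ with $\sigma^{q+1}=-1$. A direct check then yields the dichotomy required by the theorem: $\ell_1,\ell_2\subseteq\cF$ when $q\equiv 1\pmod 4$ (since then $\nu\in\GF(q)$ and $\nu^{q+1}=-1$), whereas $\ell_1\cap\cF=\ell_2\cap\cF=\{P_\infty\}$ when $q\equiv 3\pmod 4$. In the former case the two components of $\cB_\infty$ are themselves rulings of $\cF$; in the latter case they meet $\cF$ only at the vertex.

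With these two ingredients in hand, each bullet of the theorem becomes a case-by-case synthesis of Lemma~\ref{lm32}, Lemma~\ref{rlem} and the ruling structure of $\cF$. Through an affine point, only $\cB_{\alpha,\beta}$-lines are admissible, so Lemma~\ref{lm32} already gives $2$ or $0$ lines according to $q\bmod 4$; when $q\equiv 1\pmod 4$, Lemma~\ref{rlem} places their ideal points on $\ell_1\cup\ell_2\subseteq\cF$, so both lines survive into $\cM_{\alpha,\beta}$. Through a point $L_\infty\in(\ell_1\cup\ell_2)\setminus\{P_\infty\}$, which belongs to $\cM_{\alpha,\beta}$ precisely when $q\equiv 1\pmod 4$, Lemma~\ref{lm32} yields the $q$ affine lines of $\cB_{\alpha,\beta}$ inside the plane $x\pm\nu y=\bar{k}$, and the ideal line of that plane is the ambient $\ell_i$ itself, which is a ruling of $\cF$; the resulting pencil of $q+1$ lines is thus contained in $\cM_{\alpha,\beta}$. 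Through a point $Q_\infty\in\cF\setminus\cB_\infty$ distinct from $P_\infty$, the degree principle rules out any affine line through $Q_\infty$, for it would force $Q_\infty\in\cB_\infty$; only the unique ruling of $\cF$ through $Q_\infty$ survives, giving one line. Finally, through $P_\infty$ all $q+1$ rulings of $\cF$ pass, and the only two $\cB_{\alpha,\beta}$-lines through $P_\infty$ singled out in Lemma~\ref{lm32} are $\ell_1,\ell_2$ themselves, so no additional affine line can contribute.

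The only non-routine ingredient is the degree principle; everything else is bookkeeping along the template already set by the proofs of Lemma~\ref{lm32} and Lemma~\ref{rlem}. The main obstacle I anticipate is to keep each point type paired with the correct residue of $q$ modulo $4$ and with the correct status of $\ell_1\cup\ell_2$ relative to $\cF$, so as to neither double-count the ideal lines $\ell_1,\ell_2$ nor miss the extra ruling of $\cF$ passing through an ``exterior'' point at infinity.
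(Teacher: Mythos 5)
Your proposal is correct and follows essentially the same route as the paper, which simply observes that the affine points of $\cM_{\alpha,\beta}$ coincide with those of $\cB_{\alpha,\beta}$, that $\cB_{\infty}=\ell_1\cup\ell_2$ is contained in $\cF$ exactly when $q\equiv 1\pmod 4$, and then invokes Lemma~\ref{lm32}. The one ingredient you add, the B\'ezout-type ``degree principle'', is left implicit in the paper but is valid (a line meeting the degree-$2q$ hypersurface \eqref{eqqh} in $q^2>2q$ points must lie on it) and cleanly justifies that every line of $\cM_{\alpha,\beta}$ not contained in $\Sigma_{\infty}$ is already a line of $\cB_{\alpha,\beta}$, so that Lemma~\ref{lm32} indeed accounts for all of them.
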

\begin{proof}
We observe that the affine points of $\cM_{\alpha,\beta}$ are the same as those of $\cB_{\alpha,\beta}$, whereas the set $\cF$ of points at infinity of $\cM_{\alpha,\beta}$ consists of the points $P=(0,x,y,z)$ such that  $x^{q+1}+y^{q+1}=0$. Furthermore,  $\cB_{\infty}=\ell_1 \cup \ell_2$ is contained in $\cF$ if $q\equiv1\pmod4$.
Hence, from Lemma \ref{lm32} we get the result.
\end{proof}

 \section{Connected graphs from $\cM_{\alpha,\beta}$ in $\PG(3,q^2)$,  $q\equiv 1\pmod 4$  }
 \label{sec:3}
 Let $\cV$ be an algebraic variety in $\PG(n-1,q^2)$ or, more
 in general, just a set of points and suppose that
 $\cV$ contains some projective lines.
 Then we can define the \emph{collinearity graph} of $\cV$, say
 $\Gamma(\cV)=(\cP,\cE)$ as the graph whose vertices $\cP$
 are the points of $\cV$ and such that two points $P$ and $Q$ are collinear
 in $\Gamma(\cV)$ if and only if the line $[\langle P, Q\rangle]$ is
 contained in $\cV$.

 When $\cV$ is a (non-degenerate) quadric or Hermitian variety, the
 graph $\Gamma(\cV)$ has a very rich structure for it is strongly
 regular and admits a large automorphism group; this has been
 widely investigated; see~\cite[Chapter 2]{BvM22}, \cite{S11}.

 More in general,
 the properties of the graph $\Gamma(\cV)$ provide insight on
 the geometry of $\cV$ since any automorphism of $\cV$
 is also naturally an automorphism of $\Gamma(\cV)$, but the converse
 is not true in general.

 \begin{lemma}\label{diam}
   Let $\cV$ be an algebraic variety containing some lines and let $\cV_{\infty}=\cV\cap \Sigma_{\infty}$ where $\Sigma_{\infty}$ is an
   hyperplane of $\PG(n-1,q^2)$. If the graph $\Gamma({\cV_{\infty}})$   is connected and through each point of $\cV$ there passes at least one line of $\cV$ then the collinearity graph $\Gamma(\cV)$ is connected and its diameter $d(\Gamma({\cV}))$ is at most $d(\Gamma({\cV_{\infty}}))+2$.
 \end{lemma}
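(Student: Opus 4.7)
The plan is to prove both conclusions at once by constructing, for each pair $P,Q\in\cV$, an explicit walk in $\Gamma(\cV)$ connecting them whose length is at most $d(\Gamma(\cV_{\infty}))+2$. The existence of such a walk forces connectedness, while the length bound yields the diameter estimate.

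The construction rests on two observations. First, since $\cV_{\infty}=\cV\cap\Sigma_{\infty}\subseteq\cV$, any projective line lying in $\cV_{\infty}$ automatically lies in $\cV$; consequently $\Gamma(\cV_{\infty})$ is an induced subgraph of $\Gamma(\cV)$ on the vertex set $\cV_{\infty}$, and every $s$-step path in $\Gamma(\cV_{\infty})$ is simultaneously an $s$-step path in $\Gamma(\cV)$. Second, using the hypothesis that every point of $\cV$ lies on some line of $\cV$, one can associate to each affine point $P\in\cV\setminus\Sigma_{\infty}$ a point $P^{*}\in\cV_{\infty}$ adjacent to $P$ in $\Gamma(\cV)$: simply pick a line $\ell\subseteq\cV$ through $P$ and take $P^{*}:=\ell\cap\Sigma_{\infty}$, which is a single point (because $\ell$ is not contained in $\Sigma_{\infty}$, having an affine point) automatically lying in $\cV\cap\Sigma_{\infty}=\cV_{\infty}$. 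If $P$ already belongs to $\cV_{\infty}$, I would set $P^{*}:=P$ so that this preliminary step costs no edges.

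Given these tools, for arbitrary $P,Q\in\cV$ I would concatenate the one-step walk $P\to P^{*}$, a shortest $P^{*}Q^{*}$-path inside $\Gamma(\cV_{\infty})$ (of length at most $d(\Gamma(\cV_{\infty}))$, lifted to $\Gamma(\cV)$ via the first observation), and the one-step walk $Q^{*}\to Q$, producing a walk of total length at most $d(\Gamma(\cV_{\infty}))+2$. There is no substantive obstacle: once the two observations above are recorded, the argument is bookkeeping, and the only point to watch is the non-containment of the chosen line $\ell$ in $\Sigma_{\infty}$ when $P$ is affine, which holds automatically.
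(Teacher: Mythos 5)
Your proposal is correct and follows the same route as the paper: send each point to a point at infinity along a line of $\cV$ through it, then connect the two resulting points of $\cV_{\infty}$ by a path in $\Gamma(\cV_{\infty})$, which embeds in $\Gamma(\cV)$ since $\cV_{\infty}\subseteq\cV$. Your write-up is simply a more explicit version of the paper's one-line argument.
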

 \begin{proof}
   Each line of $\cV$ has at least  a point at infinity hence, given two points $P$ and $Q$ there exists a path from $P$ to a point at infinity $P'$ and from $Q$ to another point at infinity $Q'$ and finally a path
   consisting of points in $\cV_{\infty}$
   from $P'$ to $Q'$.
 \end{proof}

Let $\cM_{\alpha,\beta}$ be as in~\eqref{qmv}.
 \begin{theorem}
  If $q\equiv 1 \pmod4$, then the graph $\Gamma(\cM_{\alpha,\beta})$ is connected and its diameter is $3$.
 \end{theorem}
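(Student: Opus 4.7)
My plan is to establish $d(\Gamma(\cM_{\alpha,\beta}))\le 4$ via Lemma~\ref{diam}, sharpen this to $d\le 3$ by a case analysis on endpoint types, and finally exhibit a pair of points of $\cM_{\alpha,\beta}$ at distance exactly $3$.

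As a preliminary I would verify that the only lines contained in the Hermitian cone $\cF$ are the $q+1$ generators through $P_\infty$: any line of $\Sigma_\infty$ avoiding $P_\infty$ projects bijectively onto $\PG(1,q^2)$ and hence cannot lie in $\cF$, which meets the base in only $q+1$ points. Consequently $\Gamma(\cF)$ is connected with diameter $2$, two points being joined through $P_\infty$ whenever they lie on distinct generators. Theorem~\ref{th32} ensures that every point of $\cM_{\alpha,\beta}$ is on at least one line of $\cM_{\alpha,\beta}$, so Lemma~\ref{diam} yields connectedness of $\Gamma(\cM_{\alpha,\beta})$ together with the bound $d(\Gamma(\cM_{\alpha,\beta}))\le 4$.

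To tighten this to $3$ I split two arbitrary points $P,Q\in\cM_{\alpha,\beta}$ by type. If $P,Q$ are both affine, Lemma~\ref{rlem} yields points $P',Q'\in\ell_1$ adjacent respectively to $P$ and $Q$; since $\ell_1\subset\cF$, the path $P\to P'\to Q'\to Q$ has length $3$. If $P$ is affine and $Q\in\cF\cap(\ell_1\cup\ell_2)$ (possibly $Q=P_\infty$), the Lemma~\ref{rlem}-line from $P$ to the $\ell_i$ containing $Q$ gives distance $\le 2$. If $P$ is affine and $Q\in\cF\setminus(\ell_1\cup\ell_2\cup\{P_\infty\})$, I take $P'\in\ell_1$ adjacent to $P$ and follow $P\to P'\to P_\infty\to Q$, where the last edge lies on the generator of $\cF$ through $Q$. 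Two infinite points are already at distance at most $2$ by the analysis of $\Gamma(\cF)$.

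For the lower bound I fix any affine $P$ together with any $Q\in\cF\setminus(\ell_1\cup\ell_2\cup\{P_\infty\})$. By Theorem~\ref{th32}, $Q$ lies on exactly one line of $\cM_{\alpha,\beta}$, namely the cone generator $g$ of $\cF$ through $P_\infty$, so every neighbour of $Q$ belongs to $\cF$. Each non-$P_\infty$ point of $g$ also has only this single line, while the $q+1$ lines of $\cM_{\alpha,\beta}$ through $P_\infty$ are exactly the $q+1$ generators of $\cF$ (they are all contained in $\cF\subset\cM_{\alpha,\beta}$, and Theorem~\ref{th32} counts no others). Hence the ball of radius $2$ around $Q$ stays inside $\cF$ and excludes $P$, so $d(\Gamma(\cM_{\alpha,\beta}))\ge 3$. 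I do not foresee a real obstacle in this argument; the only delicate point is the identification of the $q+1$ lines of $\cM_{\alpha,\beta}$ through $P_\infty$ with the cone generators of $\cF$, which prevents the $2$-neighbourhood of a peripheral point of $\cF$ from leaking into the affine part, and once it is in place both the case analysis and the witness are routine.
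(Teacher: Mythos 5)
Your proof is correct and follows essentially the same route as the paper: both arguments use Lemma~\ref{rlem} to send affine points to $\ell_1\cup\ell_2$, use $P_{\infty}$ as a hub for the generators of the cone $\cF$, and exhibit the same witness pair (an affine point together with a point of $\cF\setminus(\ell_1\cup\ell_2)$) for the lower bound. If anything, your case analysis at infinity is slightly more complete, since the paper leaves the upper-bound cases involving points of $\cM_{\alpha,\beta}\setminus\cB_{\alpha,\beta}$ implicit.
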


 \begin{proof}
  We recall that  $\cB_{\alpha,\beta}\subseteq\cM_{\alpha,\beta}$, $\cB_{\alpha,\beta}\setminus\cB_{\infty}=\cM_{\alpha,\beta}\setminus \Sigma_{\infty}$ and that
   $\cB_{\infty}$
   splits in the
   union of the two distinct lines $\ell_1,\ell_2$ through  $P_{\infty}$.
   In particular, $\Gamma(\cB_{\infty})$ is a connected graph of diameter
   $2$.
   Take now two points $P,Q\in\cB_{\alpha,\beta}$.
   If $P,Q\in \cB_{\infty}$, then we have $d(P,Q)\leq 2$ and there is nothing
   to prove.
   Suppose now $P\in \cB_{\alpha,\beta}\setminus\cB_{\infty}$ and $Q\in\cB_{\infty}$.
   Suppose $Q\in\ell_i$. Then, from Lemma \ref{rlem} we can consider  a point
   $P'=r_i(P)\cap\ell_i$ where $r_i(P)$ is one of the two lines through $P$ which is contained in $\cB_{\alpha,\beta}$. If $P'=Q$, then $d(P,Q)=1$; otherwise
   $d(P,Q)=2$.

   Take now $P,Q\in\cB_{\alpha,\beta}\setminus\cB_{\infty}$. Then, again from Lemma \ref{rlem},   the lines
   $r_1(P)$ and $r_1(Q)$ meet $\ell_1$. Put $P'=r_1(P)\cap\ell_1$
   and $Q'=r_1(Q)\cap\ell_1$. If $P'=Q'$, then $d(P,Q)\leq 2$;
   otherwise $d(P,Q)\leq 3$.
   We now show that there are pairs of points in $\cM_{\alpha,\beta}$ which are at distance $3$.
   Take $P\in\cB_{\alpha,\beta}\setminus\cB_{\infty}$ and $Q\in\cM_{\alpha,\beta}\setminus\cB_{\alpha,\beta}$.
   Then, $Q$ is not collinear with any affine point by construction;
   also $Q$ is not collinear
   with $P_i:=\ell_i\cap r_i(P)$, $i=1,2$.
   So, the shortest paths from $P$ to $Q$ are of the form $P~P_i~P_{\infty}~Q$.
   It follows that $d(P,Q)=3$ and thus the diameter of the graph is $3$.

 \end{proof}

 \section{Main result}
\label{sec:4}
In this section
we show that the arguments of~\cite{BE92} for classifying BM unitals in
$\PG(2,q^2)$ can be extended to BM quasi-Hermitian varieties in
$\PG(3,q^2)$, $q$ odd. We keep all previous notations.

Two BM quasi-Hermitian varieties $\cM_{\alpha,\beta}$ and $\cM_{\alpha',\beta'}$ of $\PG(3,q^2)$ are \emph{ equivalent} if there exists a semilinear collineation $\psi \in \PGamma(4,q^2)$ such that $\psi(\cM_{\alpha,\beta})=\cM_{\alpha',\beta'}$.
 \begin{lemma}
   \label{l51}
   Let $\psi$ be a semilinear collineation of $\PG(3,q^2)$, $q$ odd,  such that
   $\psi(\cM_{\alpha,\beta})=\cM_{\alpha',\beta'}$ where  $\cM_{\alpha,\beta}$ and $\cM_{\alpha',\beta'}$ are two BM quasi-Hermitian varieties.
   Then $\psi$ fixes $P_{\infty}$ and stabilizes $\Sigma_{\infty}$.
   Also, if $q\equiv1\pmod4$ then $\psi(\cB_{\alpha,\beta})=\cB_{\alpha',\beta'}$.
 \end{lemma}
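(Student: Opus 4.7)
The plan is to extract from Theorem~\ref{th32} a purely combinatorial characterisation of $P_\infty$ inside $\cM_{\alpha,\beta}$ and then use the fact that $\psi$, being a semilinear collineation that maps $\cM_{\alpha,\beta}$ onto $\cM_{\alpha',\beta'}$, must preserve this characterisation. For $P\in\cM_{\alpha,\beta}$ let $\tau(P)$ denote the number of lines of $\cM_{\alpha,\beta}$ through $P$; since $\psi$ sends lines to lines and $\cM_{\alpha,\beta}$ to $\cM_{\alpha',\beta'}$, the value $\tau(P)$ equals the corresponding count for $\psi(P)$ in $\cM_{\alpha',\beta'}$. I would then split the argument by the congruence of $q$ modulo $4$.

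For $q\equiv 3\pmod 4$ the situation is immediate from Theorem~\ref{th32}: $P_\infty$ is the only point of $\cM_{\alpha,\beta}$ with $\tau(P)>1$, so $\psi(P_\infty)=P_\infty$. For $q\equiv 1\pmod 4$ one cannot conclude from $\tau$ alone because affine points also have $\tau=2$, so I would instead consider the set
\[
S=\{P\in\cM_{\alpha,\beta}:\tau(P)=q+1\},
\]
which by Theorem~\ref{th32} equals $\ell_1\cup\ell_2$. Any projective line contained in the union of two distinct lines of $\PG(3,q^2)$ is forced to coincide with one of them, so $\ell_1$ and $\ell_2$ are the only lines of $\cM_{\alpha,\beta}$ lying in $S$; thus $P_\infty$ is singled out as the unique point of $\cM_{\alpha,\beta}$ lying on two distinct lines of the variety that are entirely contained in $S$. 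This description is visibly preserved by $\psi$, giving $\psi(P_\infty)=P_\infty$ here as well.

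To see that $\Sigma_\infty$ is stabilised, I would use that in both parity cases the $q+1$ lines of $\cM_{\alpha,\beta}$ through $P_\infty$ are exactly the generators of the Hermitian cone $\cF$, hence they lie in $\Sigma_\infty$, and any two of them already span $\Sigma_\infty$. Under $\psi$ these $q+1$ lines are mapped to lines of $\cM_{\alpha',\beta'}$ through $P_\infty$, which by the same reasoning lie in $\Sigma_\infty$; hence $\psi(\Sigma_\infty)=\Sigma_\infty$. Finally, when $q\equiv 1\pmod 4$ we have $\ell_1\cup\ell_2\subseteq\cF\subseteq\cM_{\alpha,\beta}$, and hence
\[
\cB_{\alpha,\beta}=(\cM_{\alpha,\beta}\setminus\Sigma_\infty)\cup(\ell_1\cup\ell_2)=(\cM_{\alpha,\beta}\setminus\Sigma_\infty)\cup S.
\]
The already established stabilisation of $\Sigma_\infty$ gives $\psi(\cM_{\alpha,\beta}\setminus\Sigma_\infty)=\cM_{\alpha',\beta'}\setminus\Sigma_\infty$, while $\psi(S)=S'=\ell'_1\cup\ell'_2$ by the combinatorial description of $S$, and combining the two pieces yields $\psi(\cB_{\alpha,\beta})=\cB_{\alpha',\beta'}$.

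The step I expect to be most delicate is the combinatorial isolation of $P_\infty$ in the case $q\equiv 1\pmod 4$: the plain line-count does not distinguish $P_\infty$ from the remaining points of $\ell_1\cup\ell_2$, and the refinement through \emph{two} lines of $\cM_{\alpha,\beta}$ contained in $S$ really is needed. Once $\psi(P_\infty)=P_\infty$ is available, the rest is essentially clean bookkeeping on the three disjoint pieces making up $\cM_{\alpha,\beta}$ and $\cB_{\alpha,\beta}$.
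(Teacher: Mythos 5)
Your proof is correct and follows essentially the same strategy as the paper: both arguments use the line-counts of Theorem~\ref{th32} to characterize $P_{\infty}$ and $\Sigma_{\infty}$ purely combinatorially (splitting on $q \bmod 4$) and then obtain $\psi(\cB_{\alpha,\beta})=\cB_{\alpha',\beta'}$ by decomposing $\cB_{\alpha,\beta}$ into its affine part and $\ell_1\cup\ell_2$. The only (harmless) differences are in the specific invariants chosen: for $q\equiv 1\pmod 4$ the paper isolates $P_{\infty}$ as the point of $\ell_1\cup\ell_2$ whose remaining $q-1$ lines meet no other line of the variety, and stabilizes $\Sigma_{\infty}$ via the span of the points lying on exactly one line, whereas you use the two lines of the variety contained in $S=\{P:\tau(P)=q+1\}$ and the span of the $q+1$ generators through $P_{\infty}$, both of which are equally valid consequences of Theorem~\ref{th32}.
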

 \begin{proof}
   First, we show that $\psi$ fixes $P_{\infty}$ for $q\equiv 3\pmod 4$.
   From Theorem \ref{th32} we have that $P_{\infty}$ is the only point of the two varieties contained in $q+1$ lines and hence $\psi(P_{\infty})= P_{\infty}$.
   Furthermore, we observe that $\Sigma_{\infty}$ is the  plane through $P_{\infty}$ meeting both $\cM_{\alpha,\beta}$ and  $\cM_{\alpha',\beta'}$ in $q^3+q^2+1$   points  which are on the $q+1$ lines through $P_\infty$.
All of the $q^3-q^2$ points of $\cM_{\alpha,\beta}$ and $\cM_{\alpha',\beta'}$
lying on exactly one line contained in the respective variety are
in this plane, and these points also span $\Sigma_{\infty}$.
So  also $\Sigma_{\infty}$ is left invariant by $\psi$.
\par
Now assume $q\equiv 1 \pmod 4$;
from Theorem \ref{th32}, for each point in $\ell_1 \cup \ell_2$ there pass $q+1$ lines  of the quasi-Hermitian varieties however
$P_{\infty}$ is the only point on $\ell_1\cup \ell_2$ such that the other  $q-1$ lines through it are not incident with other lines of the two varieties, hence we again obtain
$\psi(P_{\infty})= P_{\infty}$.
In this case $\cB_{\alpha,\beta}\subseteq\cM_{\alpha,\beta}$.
   Since $\psi(\Sigma_{\infty})=\Sigma_{\infty}$,
   we have
   \[ \psi(\cB_{\alpha,\beta}\setminus\Sigma_{\infty})=\psi(\cM_{\alpha,\beta}\setminus\Sigma_{\infty})=\cM_{\alpha',\beta'}\setminus\Sigma_{\infty}=\cB_{\alpha',\beta'}
     \setminus\Sigma_{\infty},\]
   that is $\psi$ stabilizes the affine part of $\cB_{\alpha,\beta}$.

Furthermore $\cB_{\infty}=\cB_{\alpha,\beta}\cap\Sigma_{\infty}$
   consists of the union of the two  lines, $\ell_1$ and $\ell_2$.
   Observe also that the lines through the affine points of
   $\cM_{\alpha,\beta}$ are also lines of $\cB_{\alpha,\beta}$ (see
   Theorem~\ref{th32}) and, in particular they are incident either
   $\ell_1$ or $\ell_2$. This is equivalent to say that the points of
   $\ell_1\cup\ell_2$ different from $P_{\infty}$
   are exactly the points of $\Sigma_{\infty}$ through which
   there pass some affine lines of $\cM_{\alpha,\beta}$.
   This implies that $\psi(\ell_1\cup\ell_2)=\ell_1\cup\ell_2$ and,
   consequently
   \[
     \psi(\cB_{\alpha,\beta})=\psi(\cB_{\alpha,\beta}\setminus\Sigma_{\infty})
     \cup\psi(\ell_1\cup\ell_2)=(\cM_{\alpha',\beta'}\setminus\Sigma_{\infty})
     \cup(\ell_1\cup\ell_2)=\cB_{\alpha',\beta'}.
   \]
 \end{proof}

 \begin{theorem}
   \label{lorb}
   Suppose $q\equiv1\pmod4$. Let  $\cG$ be the group of collineations
   $\cG=\mathrm{Aut}(\cM_{\alpha,\beta})\subseteq\mathrm{P\Gamma{}L}(4,q^2)$ and
   $\fG$
   the group of graph automorphisms
   $\fG=
   \mathrm{Aut}(\Gamma(\cM_{\alpha,\beta}))$.
   Then the sets
   \begin{itemize}
   \item $\Omega_0:=\{P_{\infty}\}$;
   \item $\Omega_1$ consisting of
   the points at infinity of $\cB_{\alpha,\beta}$ different from
   $P_{\infty}$;
 \item $\Omega_2:=\cM_{\alpha,\beta}\setminus\Sigma_{\infty}$
 \end{itemize}
 are all stabilized by both $\cG$ and $\fG$.
 Furthermore, $\Omega_3=\cM_{\alpha,\beta}\setminus\cB_{\alpha,\beta}$ is
 an orbit for $\fG$.
 \end{theorem}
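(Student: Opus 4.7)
The proof combines two ingredients: reading graph-theoretic invariants of the collinearity graph $\Gamma(\cM_{\alpha,\beta})$ off Theorem~\ref{th32} to force both $\fG$ and $\cG\hookrightarrow\fG$ to respect each $\Omega_i$, and then exhibiting enough graph automorphisms supported on $\Omega_3$ to make it a single orbit.

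For the stabilization part, the first step is to compute vertex degrees in $\Gamma$. Since two distinct lines of $\cM_{\alpha,\beta}$ through a common point meet only in that point, Theorem~\ref{th32} translates into the degrees $(q+1)q^2$ on $\Omega_0\cup\Omega_1$, $2q^2$ on $\Omega_2$, and $q^2$ on $\Omega_3$. These three values being pairwise distinct, $\fG$ stabilizes each of $\Omega_2$, $\Omega_3$ and $\Omega_0\cup\Omega_1$; the same is true for $\cG$, since any collineation preserving $\cM_{\alpha,\beta}$ must permute the lines that it contains.

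To split $\{P_\infty\}$ off from $\Omega_1$ I would invoke a second graph invariant, namely the number of neighbors of a vertex of degree $(q+1)q^2$ lying in the already $\fG$-invariant set $\Omega_2$. By Lemma~\ref{lm32} the only lines of $\cB_{\alpha,\beta}$ through $P_\infty$ are $\ell_1$ and $\ell_2$; hence every line of $\cM_{\alpha,\beta}$ through $P_\infty$ is forced into $\Sigma_\infty$ (otherwise its affine part would provide a further line of $\cB_{\alpha,\beta}$ through $P_\infty$), so $P_\infty$ has no neighbor in $\Omega_2$. On the other hand each $L\in\Omega_1$ is the infinity point of $q$ affine lines of $\cB_{\alpha,\beta}\subseteq\cM_{\alpha,\beta}$, contributing $q^3>0$ neighbors in $\Omega_2$. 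This isolates $\{P_\infty\}$, and so $\Omega_0$ and $\Omega_1$ are each $\fG$- and $\cG$-stable.

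For the orbit claim on $\Omega_3=\cF\setminus(\ell_1\cup\ell_2)$, I would analyse $\Gamma$ restricted to $\Omega_3$. Each $L\in\Omega_3$ lies on a unique line of $\cM_{\alpha,\beta}$, namely the generator of the cone $\cF$ through $L$, whose other points are $P_\infty$ together with the $q^2-1$ further members of $\Omega_3$ on that generator. Hence the restriction of $\Gamma$ to $\Omega_3$ is a disjoint union of $q-1$ cliques of size $q^2$, and the only edges from $\Omega_3$ to its complement end at $P_\infty$, to which every vertex of $\Omega_3$ is adjacent. Any permutation of $\Omega_3$ preserving this clique partition therefore extends, by the identity on $\cM_{\alpha,\beta}\setminus\Omega_3$, to a graph automorphism of $\Gamma$; since the automorphism group of a disjoint union of $q-1$ copies of $K_{q^2}$ acts transitively on its vertex set, $\fG$ is transitive on $\Omega_3$ and the orbit statement follows. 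The only delicate point in the whole argument is the first half of the $P_\infty$-versus-$\Omega_1$ separation, which is exactly where Lemma~\ref{lm32} does the real work.
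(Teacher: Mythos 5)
Your proposal is correct, and for the stabilization claims it takes a genuinely different route from the paper's. The paper works first with the collineation group $\cG$: it invokes Lemma~\ref{l51} to fix $P_{\infty}$ and stabilize $\Sigma_{\infty}$, quotes from \cite{ACK} the transitivity of $\cG$ on affine points, and uses Lemma~\ref{rlem} together with the explicit collineation $(J,X,Y,Z)\mapsto(J,X,-Y,Z)$ to show that $\Omega_1$ and $\Omega_2$ are in fact single $\cG$-orbits; only then does it pass to $\fG$ via the remark that $\fG$-orbits are unions of $\cG$-orbits, separating them by incidence counts (in particular it isolates $P_\infty$ through its adjacency to $\Omega_3$, whereas you use its non-adjacency to $\Omega_2$ --- the two invariants are interchangeable). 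You instead argue entirely inside $\fG$ with intrinsic graph invariants (the degrees $kq^2$ read off Theorem~\ref{th32}, then the number of neighbours in the already invariant set $\Omega_2$) and deduce the $\cG$-statements from the embedding of $\cG$ into $\fG$. Your version is shorter and self-contained for the statement as written; what it does not recover is the extra information the paper's proof establishes along the way, namely that $\Omega_1$ and $\Omega_2$ are single orbits of $\cG$ rather than merely invariant sets. For the transitivity of $\fG$ on $\Omega_3$ your argument coincides with the paper's: both identify the restriction of $\Gamma$ to $\Omega_3$ as $q-1$ disjoint copies of $K_{q^2}$ attached to the rest of the graph only through $P_\infty$, and realize the wreath product $S_{q^2}\wr S_{q-1}$ inside $\fG$ acting transitively there.
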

 \begin{proof}
   By~\cite[\S 4]{ACK}, we know that there is a subgroup of $\cG$
   which is transitive on the affine points of
   $\cM_{\alpha,\beta}$ i.e. on $\Omega_2$.
   By Lemma~\ref{l51}, any collineation in $\cG$ must
   stabilize the plane $\Sigma_{\infty}$; so any element of $\cG$ maps
   points of $\Omega_2$ into points of $\Omega_2$ and $\Omega_2$ is
   an orbit of $\cG$.
   Also by Lemma~\ref{l51},  $\Omega_0:=\{P_{\infty}\}$
   is fixed by any  $\gamma\in\cG$.
   So we have that the points at infinity of
   $\cB_{\alpha,\beta}\setminus\{P_{\infty}\}$, as well as the points of
   $\cM_{\alpha,\beta}\setminus\cB_{\alpha,\beta}$, are union of orbits.
   Let $\ell_1,\ell_2$ be the two lines of $\cB_{\alpha,\beta}$ at infinity.
   Using Lemma~\ref{rlem}, we see that $\cG$ is transitive on
   $\Omega_1=(\ell_1\cup\ell_2)\setminus\{P_{\infty}\}$.
   Indeed, for any two points $P,Q\in\ell_1\setminus\{P_{\infty}\}$,
   by Lemma~\ref{lm32},
   there are points $P_0, Q_0\in\Omega_2$ such that $r_1(P_0)\cap\Sigma_{\infty}=\{P\}$ and
   $r_1(Q_0)\cap\Sigma_{\infty}=\{Q\}$.

   Since $\cG$ is transitive on $\Omega_2$, there is
   $\gamma\in\cG$ such that $\gamma(P_0)=Q_0$. It follows that
   $\gamma ((r_2(P_0)\cap\Sigma_{\infty})\cup\{P\})=
   (r_2(Q_0)\cap\Sigma_{\infty})\cup\{Q\}$.
   If $\gamma(P)=Q$, then we are done.
   Otherwise, consider
   the element $\theta:(J,X,Y,Z)\to (J,X,-Y,Z)$ of $\cG$.
   Observe that $\theta(r_2(Q_0))\cap\Sigma_{\infty}=
   r_1(Q_0)\cap\Sigma_{\infty}$. Hence,
   $\theta\gamma(P)=Q$.
   Also, $\theta(\ell_1)=\ell_2$; so it follows that
   $\Omega_1:=(\ell_1\cup\ell_2)\setminus\{P_{\infty}\}$
   is an orbit of $\cG$.

 Since $\fG$ contains $\cG$, the orbits of $\fG$ are possibly unions
 of orbits of $\cG$. However, observe that the points of
 $\Omega_3$ are the only points of $\cM_{\alpha,\beta}$ which are
 on exactly one line of $\cM_{\alpha,\beta}$ through the point $P_{\infty}$.
 So these points must be permuted among each other also by $\fG$.

 The same argument shows that $\Omega_0$ is
 also an orbit for $\fG$.
 Now, consider the points of $\Omega_2$. They are the points of
 $\cB_{\alpha,\beta}\setminus\Omega_0$ incident with exactly $2$ lines,
 while the points of $\Omega_1$ are incident with more than $2$ lines.
 So $\fG$ cannot map a vertex in $\Omega_2$ into a vertex in $\Omega_1$
 and these orbits are distinct.

 Put $\Gamma:=\Gamma(\cM_{\alpha,\beta})$.
 Observe that the graph $\Gamma\setminus\{P_{\infty}\}$
is the disjoint union of $\Gamma(\Omega_3)$ and
$\Gamma(\Omega_1\cup\Omega_2)$.
In turn,  $\Gamma(\Omega_3)$ consists of the disjoint union $K_1\cup K_2\cup\dots\cup K_{q-1}$ of $q-1$ copies of the complete graph on $q^2$ elements.
Write $\{ v_{i}^j\}_{j=1,\dots,q^2}$ for the list of vertices of $K_i$ with
$i=1,\dots,q-1$.

Also, each vertex of $\Gamma(\Omega_3\cup\{P_{\infty}\})$
is collinear with $P_{\infty}$.
Let $S_{q^2}$ be the symmetric group on $q^2$ elements, and consider its
action on $\Gamma$ given by
\[ \forall\xi\in S_{q^2}: \check{\xi}(v_1^j):=v_{1}^{\xi(j)}\]
if $v_1^j\in K_1$ and fixing all remaining vertices.
Obviously $\check{S}_{q^2}<\fG$ and $\check{S}_{q^2}$ is transitive on $K_1$.
Let $S_{q-1}$ be the symmetric group on $\{1,\dots,q-1\}$ and consider
its action  on $\Gamma$ given by
\[ \forall \sigma\in S_{q-1}:
  \hat{\sigma}(v_i^j):=v_{\sigma(i)}^{j},\quad j=1,\dots,q^2 \]
and all the remaining vertices of $\Gamma$ are fixed.
We also have $\hat{S}_{q-1}<\fG$ and $\hat{S}_{q-1}$ permutes the
sets $K_i$ for $i=1,\dots,q-1$.
By construction, we see that
the wreath product
$\check{S}_{q}\wr \hat{S}_{q-1}$ is a subgroup of $\fG$,
it acts naturally on $\Gamma$, fixes all vertices not
in $\Omega_3$ and acts  transitively on $\Omega_3$.
It follows that $\fG$ is transitive on $\Omega_3$.
 \end{proof}

\begin{remark}
  It can be easily seen that the
  automorphism group of $\Gamma:=\Gamma(\cM_{\alpha,\beta})$ is in
  general much larger than the subgroup of collineations
  stabilizing $\cM_{\alpha,\beta}$. In particular the elements
  of $\check{S}_q\wr\hat{S}_{q-1}$ are not, in general, collineations.
  For instance, in the case $q=5$ with $\alpha=\beta=\varepsilon$
  where $\varepsilon$ is a primitive element of $\GF(25)$,
  root of $x^2-x+2$ in $\GF(5)$,
  the group $\cG$ has order
  $2^65^5$, while
  $\fG$ has order
  $2^{99}3^{42}5^{30}7^{12}11^{8}13^417^419^423^4$.
  In this case also $\cG$ is transitive on $\Omega_3$.
\end{remark}

 \begin{lemma}
   \label{col-lemma}
If $\cM_{\alpha,\beta}$ and $\cM_{\alpha',\beta'}$ are two equivalent BM quasi-Hermitian varieties then  there is a semilinear collineation $\phi : \cM_{\alpha,\beta}\rightarrow \cM_{\alpha',\beta'}$   of the following type
\[\phi(j,x,y, z) =(j^{\sigma},x^{\sigma},y^{\sigma},z^{\sigma})M,\  where \]
\[M=\begin{pmatrix}
   a&0&0&0\\
   0&b&c&0\\
0& c& -b&0\\
0&0&0&1
  \end{pmatrix}, \  or  \ M=\begin{pmatrix}
   a&0&0&0\\
   0&b&c&0\\
0& -c& b&0\\
0&0&0&1
  \end{pmatrix},\]
$\sigma \in\mathrm{Aut}(\GF(q^2))$, $a\in \GF(q)\setminus \{0\}$, $b,c\in \GF(q^2)$,  $b^2+c^2\neq 0$ and if $b\neq 0 \neq c$ then $c=\lambda b$ with $\lambda \in \GF(q)\setminus \{0\}$ such that $\lambda^2+1\neq0$.

\end{lemma}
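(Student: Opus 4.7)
The plan is to peel constraints off $\psi$ progressively and finish with a coefficient comparison inside~\eqref{eqqh}. Write $\psi$ as $(v)\mapsto v^{\sigma}M$ for some $M\in\mathrm{GL}_4(\GF(q^2))$ and $\sigma\in\mathrm{Aut}(\GF(q^2))$, with row vectors acting on the right. Lemma~\ref{l51} gives that $\psi$ fixes $P_{\infty}$ and stabilizes $\Sigma_{\infty}$: the first condition forces the last row of $M$ to vanish outside the $(4,4)$-entry, the second forces the first column of $M$ to vanish outside the $(1,1)$-entry. Rescaling, take $M_{44}=1$. Using that $\mathrm{Aut}(\cM_{\alpha',\beta'})$ is transitive on its affine points by~\cite[\S4]{ACK}, I precompose $\psi$ with a suitable element of that group to assume $\psi(O)=O$ where $O=(1,0,0,0)$; this clears the remaining entries of the first row of $M$ and reduces it to the block-diagonal shape
\[
  M=\begin{pmatrix} a & 0 & 0 & 0 \\ 0 & b_1 & b_2 & h_1 \\ 0 & c_1 & c_2 & h_2 \\ 0 & 0 & 0 & 1 \end{pmatrix}.
\]

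Next I analyse the middle $2\times 2$ block $B$ via its induced $\sigma$-semilinear action on $\Sigma_{\infty}/\langle P_{\infty}\rangle\cong\PG(1,q^2)$: $B$ must preserve the Hermitian $(q+1)$-set $\{X^{q+1}+Y^{q+1}=0\}$ (because $\psi$ sends $\cF$ to the Hermitian cone at infinity of $\cM_{\alpha',\beta'}$) and, by Lemma~\ref{l51}, must also permute the pair $\{(\nu,1),(-\nu,1)\}$ corresponding to $\ell_1,\ell_2$. Setting $b:=b_1$, $c:=b_2$ and chasing the possible images of $(\pm\nu,1)$ splits into two cases: either $B$ swaps $\ell_1\leftrightarrow\ell_2$, giving $(c_1,c_2)=(c,-b)$, or $B$ fixes each line, giving $(c_1,c_2)=(-c,b)$. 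In either case the Hermitian condition $B\,\overline{B}^{\top}=\mu I$ collapses to $b^q c = bc^q$, i.e.\ $c/b\in\GF(q)$ whenever $b\neq 0$; writing $c=\lambda b$ with $\lambda\in\GF(q)$, invertibility $\det B=\pm(b^2+c^2)=\pm b^2(1+\lambda^2)$ then forces $b^2+c^2\neq 0$ and $\lambda^2+1\neq 0$ when $b\neq 0\neq c$.

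It remains to pin down $h_1,h_2$ and $a$. Dehomogenizing with $J=1$, I substitute the affine image
\[
  (x',y',z')=\tfrac{1}{a}\bigl(bx^{\sigma}+cy^{\sigma},\,c_1 x^{\sigma}+c_2 y^{\sigma},\,h_1 x^{\sigma}+h_2 y^{\sigma}+z^{\sigma}\bigr)
\]
into the affine form of~\eqref{eqqh} for $\cB_{\alpha',\beta'}$ and match it, up to a common scalar, with the $\sigma$-twist of the analogous relation for $\cB_{\alpha,\beta}$. The target equation contains no purely linear and no purely $q$-th power monomials in $x^{\sigma},y^{\sigma}$, so the contributions $-h_i x^{\sigma}/a$ coming from $-z'$ and $h_i^q x^{\sigma q}/a^q$ coming from $z'^q$ (for $i=1,2$) must vanish; this forces $h_1=h_2=0$. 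With these zero, the $z^{\sigma q}$ and $z^{\sigma}$ coefficients of $z'^q-z'$ are $1/a^q$ and $-1/a$, which must be in the ratio $1:-1$ dictated by the twisted target, yielding $a^q=a$, i.e.\ $a\in\GF(q)^{*}$.

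The main obstacle is this last coefficient comparison: \eqref{eqqh} mixes monomials of degrees $1,2,q+1,2q$, but the block-diagonal shape achieved beforehand keeps the matching sparse enough for the calculation to be routine. As a by-product, the same comparison yields explicit relations between $(\alpha,\beta)$ and $(\alpha',\beta')$, which will feed into Lemma~\ref{main-lemma}.
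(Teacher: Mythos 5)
Your skeleton coincides with the paper's: Lemma~\ref{l51} plus transitivity on affine points reduces $M$ to the block form with a scalar $a$, a $2\times 2$ block $B$ and two residual entries $h_1,h_2$, and the proof ends with a comparison of the affine membership conditions. Where you genuinely diverge is in how you pin down $B$: the paper extracts the relations $b^2+c^2=e^2+f^2$, $be+cf=0$ and $b^qe+c^qf=0$ by evaluating the affine condition on the planes $Y=0$, $X=0$ and at generic points (equations \eqref{add2}--\eqref{eqq5}), whereas you read the shape of $B$ off the induced action on $\Sigma_{\infty}/\langle P_{\infty}\rangle\cong\PG(1,q^2)$, namely preservation of the Hermitian $(q+1)$-set together with the permutation of the pair $\{(\nu:1),(-\nu:1)\}$. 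Granting both facts, your case analysis correctly yields $(c_1,c_2)=(c,-b)$ or $(-c,b)$, and the unitary relation then collapses to $b^qc=bc^q$ as you say.

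The gap is that the permutation of $\{\ell_1,\ell_2\}$ is only available for $q\equiv 1\pmod 4$. Lemma~\ref{l51} asserts $\psi(\cB_{\alpha,\beta})=\cB_{\alpha',\beta'}$ only in that case; for $q\equiv 3\pmod 4$ one has $\nu^{q+1}=1$, so the points of $(\ell_1\cup\ell_2)\setminus\{P_{\infty}\}$ do not lie on $\cF$ and are not points of $\cM_{\alpha,\beta}$ at all --- no combinatorial feature of $\cM_{\alpha,\beta}$ singles out these two points of the quotient line, and your citation does not cover them. The unitary condition $B\overline{B}^{\top}=\mu I$ alone is not enough to force the claimed shape: for instance $b=1$, $c=0$, $(c_1,c_2)=(0,f)$ with $f^{q+1}=1$ satisfies it for any of the $q+1$ admissible $f$, while only $f=\pm 1$ is allowed in the conclusion. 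To cover $q\equiv 3\pmod 4$ you must recover $b^2+c^2=c_1^2+c_2^2$ and $bc_1+cc_2=0$ from the affine comparison, which is exactly what the paper does (and which works uniformly for all odd $q$). A second, smaller point: the membership condition is ``expression $\in\GF(q)$'', not a polynomial identity, and two polynomials over a finite field with the same value set need not have proportional coefficients; so your ``coefficient comparison'' forcing $h_1=h_2=0$ and $a^q=a$ must be implemented by evaluating at well-chosen elements (the paper uses $x^{\sigma}=1,-1,\eta,-\eta,1+\eta$ with $\eta^2$ a primitive element of $\GF(q)$, and gets $a\in\GF(q)^*$ more cheaply from the points $(1,0,0,c)$). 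These latter issues are repairable; the $q\equiv 3\pmod 4$ case is the one that needs a real fix.
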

\begin{proof}
  By Lemma~\ref{l51}, $\phi$ fixes the point $P_{\infty}$ and stabilizes
  $\Sigma_{\infty}$. As the automorphism group of $\cM_{\alpha,\beta}$ is
  transitive on its affine points,
  we can also assume that $\phi(1,0,0,0)=(1,0,0,0)$.
  More in detail, let $G'$ be the collineation group of $\cM_{\alpha',\beta'}$   fixing $P_{\infty}$, leaving $\cF \setminus P_{\infty}$ invariant  and transitive on the affine points of  $\cM_{\alpha',\beta'}$. If $\phi(1,0,0,0)\neq (1,0,0,0)$ we can consider the collineation $\phi' \in G'$ mapping $\phi(1,0,0,0)$ to $(1,0,0,0)$ and then we replace $\phi$ by $\phi \phi'$. This implies that
 $\phi$ has the following form up to scalar multiple
\[\phi(j,x,y, z) =(j^{\sigma},x^{\sigma},y^{\sigma},z^{\sigma})\begin{pmatrix}
   a&0&0&0\\
   0&b&c&d\\
0& e& f&g\\
0&0&0&1
  \end{pmatrix}, \]
where $\sigma \in\mathrm{Aut}(\GF(q^2))$, $a,b,c,d,e,f,g \in \GF(q^2)$ and $a \neq 0 \neq bf-ce$.

Since $(1,0,0,c)$, belongs to $\cM_{\alpha, \beta}$ if
and only if $c\in\GF(q)$, it follows that
$\phi(1,0,0,c)=(a,0,0,c)\in \cM_{\alpha', \beta'}$ implies $ca^{-1}\in
\GF(q)$, and thus $a\in\GF(q)^*$.
Now we observe that the  affine plane $Y=0$ has in common with $\cM_{\alpha,\beta}$ the points $(1,x,0,z)$ for which
$-\alpha x^2+\beta x^{q+1}-z \in \GF(q)$; so, $a^{-1}(-\alpha^{\sigma} x^{2\sigma}+\beta^{\sigma}x^{\sigma(q+1)}-z^{\sigma})\in \GF(q)$.
Thus, suppose that $(1,x,0,z)\in \cM_{\alpha,\beta}$; we have  $\phi(1,x,0,z) \in \cM_{\alpha', \beta'}$  and therefore
\begin{equation}\label{col1}
(\alpha^{\sigma}-\alpha'(b^2+c^2)/a)x^{2\sigma}-(\beta^{\sigma}-\beta'(b^{q+1}+c^{q+1})/a)x^{\sigma(q+1)}-dx^{\sigma}\in \GF(q),
\end{equation}
as $\sigma$ stabilizes $\GF(q)$.
Let $\eta \in \GF(q^2)\setminus \GF(q) $ such that $\eta^2$ is a primitive element of $\GF(q)$.
Considering $x^{\sigma}=1,-1,\eta, -\eta, 1+\eta$ in \eqref{col1}, we get
\[d=0,\]
\begin{equation}\label{add2}
\alpha^{\sigma}-\alpha'(b^2+c^2)/a=0,\end{equation}
\[
  \beta^{\sigma}-\beta'(b^{q+1}+c^{q+1})/a\in \GF(q).
\]
Similarly if we consider the affine points in common between the plane $X=0$ and $\cM_{\alpha,\beta}$, arguing as before, we obtain \[g=0\]
\[
  \alpha^{\sigma}-\alpha'(e^2+f^2)/a=0,
  \]
\begin{equation}\label{eqq2}
\beta^{\sigma}-\beta'(e^{q+1}+f^{q+1})/a\in \GF(q).\end{equation}
In particular,
\begin{equation}\label{eqq3}
b^2+c^2=e^2+f^2\neq 0.
\end{equation}
Also, since $\beta'\not\in\GF(q)$,
\begin{equation}\label{eqq4}
b^{q+1}+c^{q+1}=e^{q+1}+f^{q+1}\neq 0.
\end{equation}
Now we recall that a generic point
$(1,x,y,z)\in \cM_{\alpha,\beta}$ if and only if $\phi(1,x,y,z)\in \cM_{\alpha',\beta'}$.
On the other hand,
\[(1,x,y,z)\in \cM_{\alpha,\beta} \Leftrightarrow -\alpha(x^2+y^2)+\beta(x^{q+1}+y^{q+1})-z\in \GF(q).\]
Since $a \in\GF(q)\setminus\{0\}$ and $\sigma$ stabilizes $\GF(q)$,
the former equation is equivalent to
\begin{equation}\label{res}
a^{-1}\{-\alpha^{\sigma} (x^{2\sigma}+y^{2\sigma})+\beta^{\sigma}[x^{\sigma(q+1)}+y^{\sigma(q+1)}]-z^{\sigma}\}\in \GF(q).
\end{equation}
Next, we observe that
$\phi(1,x,y,z)=(1,\frac{bx^{\sigma}+ey^{\sigma}}{a}, \frac{cx^{\sigma}+fy^{\sigma}}{a}, \frac{z^{\sigma}}{a} ) $ and this point belongs to $\cM_{\alpha', \beta'}$    if and only if
\begin{multline}\label{res1}
a^{-1}\Big\{-\alpha' \left[\frac{(bx^{\sigma}+ey^{\sigma})^2}{a}+ \frac{(cx^{\sigma}+fy^{\sigma})^2}{a}\right]+ \\ \beta'\left[ \frac{(bx^{\sigma}+ey^{\sigma})^{(q+1)}}{a}+\frac{(cx^{\sigma}+fy^{\sigma})^{(q+1)}}{a}\right]-{z^{\sigma}}\Big\} \in \GF(q)
\end{multline}
From \eqref{res} and \eqref{res1},
we get that for all $(1,x,y,z)\in \cM_{\alpha,\beta}$ the following holds:
\begin{multline*}
\alpha^{\sigma} (x^{2\sigma}+y^{2\sigma})-\alpha' \left[\frac{(bx^{\sigma}+ey^{\sigma})^2}{a}+\frac{(cx^{\sigma}+fy^{\sigma})^2}{a}\right]+\\
+\beta'\left[ \frac{(bx^{\sigma}+ey^{\sigma})^{(q+1)}}{a}+\frac{(cx^{\sigma}+fy^{\sigma})^{(q+1)}}{a}\right]-\beta^{\sigma}[x^{\sigma(q+1)}+y^{\sigma(q+1)}] \in \GF(q)
\end{multline*}
that is,  using~\eqref{eqq3} and
\eqref{eqq4},
\begin{equation}\label{res3}
  -\alpha'
  \left[ 2 x^{\sigma}y^{\sigma}(be+cf) \right]+ \beta' \left[(b^qe+c^qf) x^{\sigma q}y^{\sigma}+(be^q+cf^q) x^{\sigma}y^{\sigma q} \right]\in \GF(q)
\end{equation}
We are going to prove that $b^qe+c^qf=0$. Thus, let $\nu \in\GF(q^2)$ be any solution of  $X^{q+1}=-1$.
The semilinear collineation $\phi$ has to leave invariant the Hermitian cone $\cF $ that  is , $\phi(0,x,\nu x,z)\in \cF$,
 and because of the first equation in \eqref{eqq4} this means
 \[
(b^qe+c^qf)\nu^{\sigma}+(be^q+cf^q)\nu^{\sigma q}=0
\]
for any of the $q+1$ different solutions of $X^{q+1}=-1$.
If $(b^qe+c^qf)\neq 0$ then  the equation $ (b^qe+c^qf)X+(b^qe+c^qf)^q X^{q}=0$ would have more than $q$ solutions which is impossible.
Thus,
\begin{equation}\label{add4}
b^qe+c^qf=0
\end{equation}
and since $\alpha' \notin \GF(q)$ \eqref{res3} gives
\begin{equation}\label{eqq5}
be+cf=0.
\end{equation}
Since $\det(M)\neq 0$, it cannot be $ce=0=bf$, so
either $c\neq 0 \neq e$ or $b\neq 0 \neq f$. Thus, from \eqref{eqq3} and \eqref{eqq5} we also get
$(e,f)=(c,-b)$ or $(e,f)=(-c,b)$. Thus from \eqref{add4} we also obtain
\begin{equation}\label{res5}
b^qc-bc^q=0.
\end{equation}
Hence if $b\neq 0 \neq c$ then $c=\lambda b$ where $\lambda \in \GF(q)$ and $\lambda^2+1\neq 0$. So the lemma follows.

\end{proof}
From the previous Lemma, taking into account  conditions  from \eqref{add2} to \eqref{eqq2}, we get that if  $\cM_{\alpha,\beta}$ and $\cM_{\alpha',\beta'}$ are equivalent, then
\begin{equation}\label{equiv}(
  \alpha',\beta')=(a\alpha^{\sigma}/(b^2+c^2), a\beta^{\sigma}/(b^{q+1}+c^{q+1})+u)\end{equation} for some   $\sigma \in\mathrm{Aut}(\GF(q^2))$, $a\in\GF(q)^*$, $u\in \GF(q)$, $b,c \in \GF(q^2): b^2+c^2\neq 0$ and if $b\neq 0\neq c$ then $c=\lambda b$ with $\lambda \in \GF(q)\setminus \{0\}$. Conversely, if condition~\eqref{equiv} holds,
there is a semilinear collineation $\cM_{\alpha,\beta}\to\cM_{\alpha',\beta'}$;
so $\cM_{\alpha,\beta}$ and $\cM_{\alpha',\beta'}$ are  equivalent.

In this case we write $(\alpha, \beta) \sim (\alpha',\beta')$ where $\sim$ is in particular an equivalence relation on the ordered pairs $(\alpha,\beta)\in \GF(q^2)^2$ such that $4\alpha^{q+1}+(\beta^q-\beta)^2 \neq 0$.

\begin{lemma}\label{lemadd5}
  Let $\cM_{\alpha,\beta}$ be a BM quasi-Hermitian variety of $\PG(3,q^2)$, $q$ odd
  and $\varepsilon$ be a primitive element of $\GF(q^2)$.
  Then, there exists $\alpha'\in\GF(q^2)\setminus\{0\}$ such that
  $\cM_{\alpha,\beta}$ is  equivalent to $\cM_{\alpha',\varepsilon}$.
\end{lemma}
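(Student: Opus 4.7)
The plan is to read off the lemma directly from the equivalence criterion~\eqref{equiv}: we have the freedom to choose $\sigma\in\mathrm{Aut}(\GF(q^2))$, $a\in\GF(q)^*$, $u\in\GF(q)$, and $b,c\in\GF(q^2)$ with $b^2+c^2\neq 0$, and we must match the second coordinate to the prescribed value $\varepsilon$. The first coordinate $\alpha'=a\alpha^{\sigma}/(b^2+c^2)$ will then be forced upon us; we simply need to verify it is nonzero.

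First I would observe that $\varepsilon$, being a primitive element of $\GF(q^2)$, has multiplicative order $q^2-1$ and hence does not lie in $\GF(q)^*$, whose elements have order dividing $q-1$. So $\varepsilon\in\GF(q^2)\setminus\GF(q)$, and by hypothesis the same is true of $\beta$. Since $\beta\notin\GF(q)$, the set $\{1,\beta\}$ is a $\GF(q)$-basis of $\GF(q^2)$, so there exist unique $k,u\in\GF(q)$ with $\varepsilon=k\beta+u$; the condition $\varepsilon\notin\GF(q)$ forces $k\neq 0$.

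Next I would instantiate the equivalence relation~\eqref{equiv} with the trivial Frobenius choice $\sigma=\mathrm{id}$, the matrix entries $b=1,\ c=0$ (so $b^2+c^2=1\neq 0$ and $b^{q+1}+c^{q+1}=1$), the scalar $a=k\in\GF(q)^*$, and the additive constant $u$ just determined. Then
\[
\frac{a\,\beta^{\sigma}}{b^{q+1}+c^{q+1}}+u \;=\; k\beta+u \;=\; \varepsilon,
\qquad
\alpha' \;:=\; \frac{a\,\alpha^{\sigma}}{b^2+c^2} \;=\; k\alpha,
\]
which is nonzero because $k\neq 0$ and $\alpha\neq 0$. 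Hence $(\alpha,\beta)\sim(\alpha',\varepsilon)$ and therefore $\cM_{\alpha,\beta}$ and $\cM_{\alpha',\varepsilon}$ are equivalent in the sense of Lemma~\ref{l51}.

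The only thing to worry about is that $\cM_{\alpha',\varepsilon}$ really is a BM quasi-Hermitian variety, i.e.\ that the non-degeneracy condition $4(\alpha')^{q+1}+(\varepsilon^q-\varepsilon)^2\neq 0$ holds. But this is automatic: $\cM_{\alpha',\varepsilon}$ is by construction the image of $\cM_{\alpha,\beta}$ under a semilinear collineation of $\PG(3,q^2)$, and $\cM_{\alpha,\beta}$ is a quasi-Hermitian variety, so its image is too, which by the characterization recalled after~\eqref{qmv} forces the inequality. There is no real obstacle in this argument; the content of the lemma is essentially that $\GF(q^2)\setminus\GF(q)$ is a single orbit under the affine action $\beta\mapsto k\beta+u$ of $\GF(q)^*\ltimes\GF(q)$, which is precisely the freedom encoded in the second coordinate of~\eqref{equiv}.
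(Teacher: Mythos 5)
Your proof is correct and takes essentially the same approach as the paper: both instantiate the equivalence criterion~\eqref{equiv} with $c=0$ and $\sigma=\mathrm{id}$ so as to carry $\beta$ to $\varepsilon$ by an affine map $\beta\mapsto k\beta+u$ with $k\in\GF(q)^*$, $u\in\GF(q)$. The only cosmetic difference is that you realize the scalar $k$ through the parameter $a$, whereas the paper takes $a=1$ and absorbs it into the norm $b^{q+1}=\beta_1$; either choice yields a valid (nonzero) $\alpha'$.
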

\begin{proof}
Write $\beta=\beta_0+\varepsilon \beta_1$, with $\beta_0,\beta_1 \in \GF(q)$ and $\beta_1 \neq 0$. Then, there exists $b\in \GF(q^2)\setminus \{0\}$, such that  $\beta_1/b^{q+1}=1$.
Therefore $(\alpha,\beta) \sim (\alpha/b^2, \beta/b^{q+1}-\beta_0/b^{q+1})=(\alpha/b^2, \varepsilon) $.
\end{proof}
In light of the previous lemma, in order to determine the equivalence
classes of BM quasi-Hermitian varieties it is enough to determine when
two varieties $\cM_{\alpha,\varepsilon}$ and $\cM_{\alpha',\varepsilon}$
are equivalent. This is done in the following.

\begin{lemma}
  \label{main-lemma}
  Let $q=p^n$ be an odd prime,  $\varepsilon$ be a primitive element of $\GF(q^2)$, $\cM_{\alpha,\varepsilon}$
  and $\cM_{\alpha',\varepsilon}$ be two BM quasi-Hermitian varieties
  of $\PG(3,q^2)$.
  Put
  \[
    \delta(\alpha):=\frac{(\varepsilon^q-\varepsilon)^2}{4\alpha^{q+1}}.
  \]
  Then, $\cM_{\alpha,\varepsilon}$ is equivalent to
  $\cM_{\alpha',\varepsilon}$ if and only if there exist $\sigma\in\mathrm{Aut}(\GF(q^2))$ such that
\[ \delta(\alpha')=\delta(\alpha)^{\sigma}.
\]
\end{lemma}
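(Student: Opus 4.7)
My plan is to reduce both directions to the parametrization of equivalences~\eqref{equiv} and to the algebraic identity
\[
  (b^2+c^2)^{q+1}=(b^{q+1}+c^{q+1})^2
\]
valid for every pair $(b,c)$ permitted by Lemma~\ref{col-lemma}. This identity is a direct case-check: the cases $b=0$ or $c=0$ are immediate, while if $c=\lambda b$ with $\lambda\in\GF(q)^*$ both sides equal $b^{2(q+1)}(1+\lambda^2)^2$, using $\lambda^{q+1}=\lambda^2$. I also use throughout that any $\sigma\in\mathrm{Aut}(\GF(q^2))$ commutes with the Frobenius $x\mapsto x^q$, and the fact that $\eta:=\varepsilon^q-\varepsilon$ satisfies $\eta^q=-\eta$, so that $\eta^2\in\GF(q)^*$ and $s:=\eta^\sigma/\eta$ lies in $\GF(q)^*$ for every $\sigma$.

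For the necessity, assume $\cM_{\alpha,\varepsilon}\sim\cM_{\alpha',\varepsilon}$ and specialize~\eqref{equiv} to $\beta=\beta'=\varepsilon$. This yields $\alpha'=a\alpha^\sigma/(b^2+c^2)$ and $\varepsilon-u=a\varepsilon^\sigma/(b^{q+1}+c^{q+1})$ for suitable data. Applying Frobenius to the second equation (using $b^{q+1}+c^{q+1},u\in\GF(q)$) and subtracting from the original yields $b^{q+1}+c^{q+1}=as$. Raising $b^2+c^2=a\alpha^\sigma/\alpha'$ to the $(q+1)$-th power and squaring the relation for $b^{q+1}+c^{q+1}$, then equating the two expressions via the boxed identity, one cancels $a^{q+1}=a^2$ on both sides and obtains
\[
  (\alpha^{q+1})^\sigma/(\alpha')^{q+1}=(\eta^2)^\sigma/\eta^2,
\]
which rearranges at once to $\delta(\alpha')=\delta(\alpha)^\sigma$.

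For the sufficiency, suppose $\delta(\alpha')=\delta(\alpha)^\sigma$. A direct manipulation shows that this relation is equivalent to $(\alpha^\sigma/(s\alpha'))^{q+1}=1$, that is, $\alpha^\sigma/(s\alpha')\in\ker\bigl(\mathrm{Nm}\colon\GF(q^2)^*\to\GF(q)^*\bigr)$. Since $\ker(\mathrm{Nm})$ coincides with the image of the map $x\mapsto x^{1-q}$ on $\GF(q^2)^*$ (both being cyclic of order $q+1$), one can choose $b\in\GF(q^2)^*$ with $b^{1-q}=\alpha^\sigma/(s\alpha')$. Setting $c:=0$, $a:=b^{q+1}/s\in\GF(q)^*$ and $u:=\varepsilon-\varepsilon^\sigma/s$, a short verification using only the defining relation of $s$ shows that $u\in\GF(q)$ and that the quintuple $(\sigma,a,b,c,u)$ satisfies~\eqref{equiv}; hence $\cM_{\alpha,\varepsilon}\sim\cM_{\alpha',\varepsilon}$ by the converse assertion preceding the lemma. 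The principal obstacle is algebraic bookkeeping, namely converting the $\delta$-condition into the clean norm-kernel statement; once this is done, producing the explicit data $(a,b,c,u)$ with $c=0$ is immediate, and there is no need to invoke the more delicate case $c=\lambda b$ from Lemma~\ref{col-lemma}.
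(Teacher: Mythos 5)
Your proof is correct, and while the necessity direction follows the paper's route, your sufficiency argument is genuinely different. For necessity, both you and the paper reduce to the parametrization~\eqref{equiv} and the identity $(b^2+c^2)^{q+1}=(b^{q+1}+c^{q+1})^2$; you verify it by the case split $c=0$, $b=0$, or $c=\lambda b$ with $\lambda\in\GF(q)^*$, whereas the paper massages it into $b^{q-1}/c^{q-1}+c^{q-1}/b^{q-1}=2$ and invokes~\eqref{res5} --- same content, slightly different packaging. For sufficiency the paper proceeds in two stages: it first applies $\sigma$ to get $(\alpha,\varepsilon)\sim(\alpha^{\sigma},\varepsilon^{\sigma})$, renormalizes the second coordinate back to $\varepsilon$ as in Lemma~\ref{lemadd5}, checks that the resulting parameter has the right $\delta$-value, and then falls back on the preliminary observation (proved via the chain $(\alpha_1,\varepsilon)\sim(\alpha_1\varepsilon^{k(q-1)},\varepsilon)$) that equal $\delta$-values force equivalence. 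You instead convert $\delta(\alpha')=\delta(\alpha)^{\sigma}$ directly into the statement that $\alpha^{\sigma}/(s\alpha')$ has norm $1$, where $s=(\varepsilon^q-\varepsilon)^{\sigma}/(\varepsilon^q-\varepsilon)\in\GF(q)^*$, and then use the coincidence of the norm kernel with the image of $x\mapsto x^{1-q}$ (Hilbert 90) to produce explicit data $(\sigma,a,b,c=0,u)$ realizing~\eqref{equiv} in a single step; I checked that $a=b^{q+1}/s\in\GF(q)^*$, $u=\varepsilon-\varepsilon^{\sigma}/s\in\GF(q)$, and that the two components of~\eqref{equiv} indeed return $(\alpha',\varepsilon)$. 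Your route is more direct and makes the underlying group-theoretic reason for sufficiency (surjectivity onto the norm-one subgroup) transparent, at the price of leaning entirely on the converse assertion following Lemma~\ref{col-lemma}; the paper's version keeps the normalization of Lemma~\ref{lemadd5} and the $\varepsilon^{q-1}$-chain visible, which it also reuses to justify that $\delta$ is a well-defined complete invariant.
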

\begin{proof}
First we observe that for all $\alpha \in\GF(q^2)\setminus \{0\}$ such that $4\alpha^{q+1}+(\varepsilon^q-\varepsilon)^2 \neq 0$   \[\delta(\alpha):=\frac{(\varepsilon^q-\varepsilon)^2}{4\alpha^{q+1}}\] belongs to $ \GF(q)\setminus\{0,-1\}$.
Conversely, given any $\delta \in\GF(q)\setminus \{0,-1\}$  we can generate some BM quasi-Hermitian varieties $\cM_{\alpha, \varepsilon}$, by choosing $\alpha$ to
be any solution of $4 \delta x^{q+1}=(\varepsilon^q-\varepsilon)^2$.      In fact, it turns out that $(\varepsilon ^q-\varepsilon)^2+4\alpha^{q+1}\neq 0$. Furthermore, let $\alpha_1$ and $\alpha_2$ be any two such solutions. Then there
exists $k$ such that $\alpha_2=\varepsilon^{k(q-1)}\alpha_1$.
On the other hand,
$(\alpha_1, \varepsilon) \sim (\alpha_1 \varepsilon^{-2} \varepsilon^{q+1}, \varepsilon \varepsilon^{-(q+1)}\varepsilon^{q+1})=(\alpha_1\varepsilon^{q-1}, \varepsilon)$.
By repeating this process $k$ times, we see
\[
  (\alpha_1,\varepsilon) \sim
(\alpha_1\varepsilon^{k(q-1)},\varepsilon)=(\alpha_2, \varepsilon).
\]
Thus $\delta(\alpha_1)=\delta(\alpha_2)$ implies that
$\cM_{\alpha_1,\varepsilon}$ is  equivalent to
$\cM_{\alpha_2,\varepsilon}$.
Hence, in order to determine the number $N$ of inequivalent BM quasi-Hermitian varieties we need to count the number of "inequivalent"
$\delta \in \GF(q)\setminus\{0,-1\}$.

Now, given two BM quasi-Hermitian varieties $\cM_{\alpha,\varepsilon}$ and $\cM_{\alpha',\varepsilon}$ and setting
\[ \delta=\delta(\alpha)=\frac{(\varepsilon^q-\varepsilon)^2}{4\alpha^{q+1}},\quad
\delta'=\delta(\alpha')=\frac{(\varepsilon^q-\varepsilon)^2}{4{\alpha'}^{q+1}}, \]
we have to show that $\cM_{\alpha,\varepsilon}\sim\cM_{\alpha',\varepsilon}$
if and only if $\delta'=\delta^{\sigma}$ for some $\sigma\in\mathrm{Aut}(\GF(q^2))$.

First, suppose that $\cM_{\alpha,\varepsilon}$ and
$\cM_{\alpha',\varepsilon}$
 are equivalent that is, $(\alpha',\varepsilon) \sim (\alpha,\varepsilon)$.
This is true if and only if
\[ \alpha'=\frac{\alpha^{\sigma}a}{b^2+c^2},
  \quad
  \varepsilon=\frac{a\varepsilon^{\sigma}}{b^{q+1}+c^{q+1}}+u, \]
for some $\sigma \in \mathrm{Aut}(\GF(q^2))$, $a \in \GF(q)\setminus \{0\}$, $u\in \GF(q)$, $b,c\in \GF(q^2)$ such that the conditions in the thesis of Lemma~\ref{col-lemma} hold.


Then
\[ \delta'=(b^2+c^2)^{q+1}\frac{(\varepsilon^q-\varepsilon)^2}{4a^2(\alpha^{\sigma})^{q+1}},\quad
  \delta^{\sigma}=(b^{q+1}+c^{q+1})^2\frac{(\varepsilon^q-\varepsilon)^2}{4a^2(\alpha^{\sigma})^{q+1}}.\]

  We observe that
\begin{equation}
  \label{dpds}
  (b^2+c^2)^{q+1}=(b^{q+1}+c^{q+1})^2.
\end{equation}
In fact, if either $b=0$ or $c=0$, then~\eqref{dpds} is trivially satisfied
and there is nothing further
to prove.
Otherwise,  a direct manipulation yields that \eqref{dpds} is equivalent to
\[ \frac{b^{q-1}}{c^{q-1}}+\frac{c^{q-1}}{b^{q-1}}=2.
\]
This gives $\frac{b^{q-1}}{c^{q-1}}=1$, which is always true, since
\eqref{res5} holds.
Because of \eqref{dpds} then $\delta'=\delta^{\sigma}$.

Conversely, suppose that  $\delta'=\delta^{\sigma}$ for some $\sigma$.
Then we observe that  $(\alpha,\varepsilon)\sim (\alpha^{\sigma}, \varepsilon^{\sigma}) $. Furthermore
$(\alpha^{\sigma}, \varepsilon^{\sigma})\sim (\alpha^{\sigma}/b^2,\varepsilon)$ where $\varepsilon^{\sigma}=b_1\varepsilon+b_0$ with  $b_1/b^{q+1}=1$ for a suitable $b\in \GF(q^2)\setminus\{0\}$, as seen in the proof of Lemma \ref{lemadd5}.

Thus we have that
\begin{multline*}
\delta(\alpha^{\sigma}/b^2) =(\varepsilon^q-\varepsilon)^2 (b^2)^{q+1}/4(\alpha^{\sigma})^{q+1}=\\
(b^2)^{q+1}\left\{[(\varepsilon^{\sigma})^q-b_0]-(\varepsilon^{\sigma}-b_0)\right\}^2/(4(\alpha^{\sigma})^{q+1}(b^{q+1})^2)=\\
[(\varepsilon^q-\varepsilon)^2]^{\sigma}/4(\alpha)^{(q+1)\sigma}=\delta^{\sigma}=\delta'.
\end{multline*}
Hence,
\[(\alpha',\varepsilon)\sim (\alpha^{\sigma}/b^2,\varepsilon)\sim (\alpha^{\sigma},\varepsilon^{\sigma})\sim(\alpha, \varepsilon)\]

\end{proof}
\begin{conjecture}
  We conjecture that Lemma~\ref{main-lemma} holds for all odd $r\geq 3$,
  as the conditions on the coefficients $\alpha,\beta$ are the same
  and the block structure of the matrices
  representing the classes should be analogous to
  that of Lemma~\ref{col-lemma}.
  For $r$ even the algebraic conditions on $\alpha$ and $\beta$  to construct quasi-Hermitian varieties are different, see \cite{ACK}.
\end{conjecture}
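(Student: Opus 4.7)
The plan is to replay, in odd dimension $r\geq 3$, the four-step chain Lemma~\ref{l51} $\to$ Lemma~\ref{col-lemma} $\to$ Lemma~\ref{lemadd5} $\to$ Lemma~\ref{main-lemma}, and isolate the single algebraic identity that has to be re-proved in higher rank. First I would generalize the combinatorial analysis of Sections~\ref{sec:2}--\ref{sec:3}: in $\PG(r,q^2)$ the variety $\cB_{\alpha,\beta}$ is cut out by the same equation with $\sum_{i=1}^{r-1}$ replacing $x^{\,\cdot}+y^{\,\cdot}$, and $\cF$ becomes a Hermitian cone with vertex $P_{\infty}$ over $\cH(r-2,q^2)$. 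I expect $P_{\infty}$ to remain intrinsically distinguished (e.g.\ as the unique point of $\cM_{\alpha,\beta}$ lying on the maximum number of lines of $\cM_{\alpha,\beta}$) and $\Sigma_{\infty}$ to be intrinsically characterized as the span of those points of $\cM_{\alpha,\beta}\cap\Sigma_{\infty}$ through which some affine line of $\cM_{\alpha,\beta}$ passes. This yields the analog of Lemma~\ref{l51}: any equivalence $\psi$ fixes $P_{\infty}$, stabilizes $\Sigma_{\infty}$ and sends $\cB_{\alpha,\beta}$ to $\cB_{\alpha',\beta'}$.

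Next, as in Lemma~\ref{col-lemma}, after composing $\psi$ with an element of the automorphism group of $\cM_{\alpha',\beta'}$ transitive on affine points (whose existence is inherited from~\cite{ACK}), I may assume $\psi$ fixes the origin. Its matrix then takes the block-diagonal form $\mathrm{diag}(a,B,1)$, with $a\in\GF(q^{2})^{*}$ and $B$ an $(r-1)\times(r-1)$ matrix over $\GF(q^{2})$. Substituting affine points lying in each coordinate hyperplane $x_{i}=0$---exactly as was done for the planes $Y=0$ and $X=0$ in the $r=3$ proof---forces $a\in\GF(q)^{*}$ together with the two form-preservation conditions
\[
B^{\top}B=\mu\,I_{r-1},\qquad B^{(q)\top}B=\nu\,I_{r-1},
\]
where $B^{(q)}$ denotes entrywise $q$-Frobenius and $\mu,\nu\in\GF(q^{2})^{*}$: the first comes from the $\alpha$-monomial $\sum x_{i}^{2}$ in~\eqref{eqqh}, the second from the $\beta$-monomial $\sum x_{i}^{q+1}$, which subsumes the preservation of $\cF$ used in the original $r=3$ argument. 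Lemma~\ref{lemadd5} carries over verbatim---it uses only a one-dimensional scaling on $\beta$---and the computation at the end of the proof of Lemma~\ref{main-lemma} then gives, word-for-word,
\[
\delta(\alpha')=\frac{\mu^{q+1}}{a^{2}}\cdot\frac{(\varepsilon^{q}-\varepsilon)^{2}}{4(\alpha^{\sigma})^{q+1}},\qquad
\delta(\alpha)^{\sigma}=\frac{\nu^{2}}{a^{2}}\cdot\frac{(\varepsilon^{q}-\varepsilon)^{2}}{4(\alpha^{\sigma})^{q+1}},
\]
so that the whole conjecture reduces to the higher-rank analog of~\eqref{dpds}, namely $\mu^{q+1}=\nu^{2}$.

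This identity is, in my view, the cleanest part of the conjecture. The explicit parametrization $(e,f)=(\pm c,\mp b)$ used in the $r=3$ proof does not generalize, but a structural Frobenius argument does. From $B^{\top}B=\mu I$ we get $B^{-1}=\mu^{-1}B^{\top}$; substituting into $B^{(q)\top}B=\nu I$ yields $B^{(q)\top}=(\nu/\mu)B^{\top}$, hence $B^{(q)}=(\nu/\mu)B$. Applying Frobenius once more and using $B^{(q^{2})}=B$ forces $(\nu/\mu)^{q+1}=1$. On the other hand, transposing $B^{(q)\top}B=\nu I$ gives $B^{\top}B^{(q)}=\nu I$, while applying Frobenius to the same equation gives $B^{\top}B^{(q)}=\nu^{q}I$; comparing the two identities yields $\nu\in\GF(q)$, whence $\nu^{q+1}=\nu^{2}$. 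Therefore $\mu^{q+1}=\nu^{q+1}=\nu^{2}$, as required. The converse---producing an explicit equivalence from $\delta(\alpha')=\delta(\alpha)^{\sigma}$---is then straightforward: take $B=\lambda I_{r-1}$ with $\lambda^{2}=a\alpha^{\sigma}/\alpha'$, so that $\mu=\lambda^{2}$ and $\nu=\lambda^{q+1}$, and combine with Lemma~\ref{lemadd5} exactly as in the $r=3$ converse.

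The main obstacle I anticipate is not the algebraic identity above but step one: rigorously adapting Lemma~\ref{lm32} and Theorem~\ref{th32} to odd $r\geq 5$, so as to guarantee that $P_{\infty}$ and $\Sigma_{\infty}$ are genuine invariants of $\cM_{\alpha,\beta}$. For $r=3$ this relied on the two-line structure of $\cB_{\infty}$ and on the tangency computation of \cite[Thm.~4.3]{ACG}; in higher dimension the analogous statement should follow from a careful bookkeeping of the intersections of $\cM_{\alpha,\beta}$ with hyperplanes through $P_{\infty}$, combined with an extension of the transitivity results of~\cite{ACK} to general~$r$, but it is here---and not in the clean Frobenius calculation above---that the bulk of the technical work lies.
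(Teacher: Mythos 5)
First, a point of framing: the statement you were given is stated in the paper as a \emph{conjecture}; the paper contains no proof of it, so your proposal can only be judged on its own terms, not against an argument of the authors. On those terms, the genuinely solid part of your plan is the final algebraic step. Writing the putative collineation in block form $\mathrm{diag}(a,B,1)$ with $B^{\top}B=\mu I_{r-1}$ and $B^{(q)\top}B=\nu I_{r-1}$, your Frobenius argument is correct: from $B^{\top}=\mu B^{-1}$ one gets $B^{(q)}=(\nu/\mu)B$, hence $(\nu/\mu)^{q+1}=1$ since $B^{(q^2)}=B$; transposing versus conjugating the relation $B^{(q)\top}B=\nu I$ gives $\nu^q=\nu$, so $\nu\in\GF(q)$ and $\mu^{q+1}=\nu^{q+1}=\nu^2$. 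This is a coordinate-free proof of the identity generalizing \eqref{dpds}, which for $r=3$ the paper obtains by explicit manipulation of $(b^2+c^2)^{q+1}=(b^{q+1}+c^{q+1})^2$ via \eqref{res5}; your version is cleaner and dimension-independent, and the converse direction indeed only needs scalar blocks together with Lemma~\ref{lemadd5}. If the conjecture is ever proved along the paper's lines, this is the right way to finish it.

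However, the proposal is not a proof; it has two genuine gaps, the first of which you acknowledge yourself. (i) The entire chain rests on the analogue of Lemma~\ref{l51}: that any equivalence fixes $P_{\infty}$, stabilizes $\Sigma_{\infty}$, and carries $\cB_{\alpha,\beta}$ to $\cB_{\alpha',\beta'}$. For $r=3$ this is extracted from the line counts of Lemma~\ref{lm32} and Theorem~\ref{th32}, which depend essentially on $\cB_{\infty}$ being a pair of lines through $P_{\infty}$. For odd $r\geq 5$ the section at infinity of $\cB_{\alpha,\beta}$ is the cone $\sum_i X_i^2=0$ over a nondegenerate quadric of $\PG(r-2,q^2)$, whose line (generator) structure is far richer, and the tangency input from \cite[Thm.~4.3]{ACG} as well as the transitivity statements from \cite{ACK} must be re-established in this setting; deferring this defers the foundation on which Lemma~\ref{col-lemma} and everything after it stands. (ii) The two form-preservation conditions $B^{\top}B=\mu I$ and $B^{(q)\top}B=\nu I$ are asserted, not derived. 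In the $r=3$ proof this derivation \emph{was} the hard content of Lemma~\ref{col-lemma}: the diagonal relations come from restricting to coordinate planes, but killing the cross-terms required the invariance of the cone $\cF$ plus a root-counting argument to get $b^qe+c^qf=0$, and only then did \eqref{res3} yield $be+cf=0$. For general $r$ one must kill all cross-terms $x_ix_j$ and $x_i^qx_j$, $i\neq j$, and exhibit enough affine points of $\cM_{\alpha,\beta}$ with prescribed vanishing coordinates to do so; this is plausible but is a real argument, not a formality. In summary: what you have is a correct reduction of the conjecture to steps (i) and (ii), together with a genuinely improved proof of the closing identity --- a useful roadmap, but not a proof.
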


\begin{theorem}
  Let $q=p^n$ with $p$ an odd prime. Then
the number $N$ of inequivalent BM quasi-Hermitian varieties $\cM_{\alpha,\beta}$ of $\PG(3,q^2)$ is
\[N=\frac{1}{n}\left(\sum_{k|n}\Phi\left(\frac{n}{k}\right)p^k\right)-2,\]
where $\Phi$ is the Euler $\Phi$-function.
\end{theorem}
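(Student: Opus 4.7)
The plan is to reduce the problem to counting orbits of the Galois group acting on a subfield, then apply the Cauchy--Frobenius (Burnside) lemma.

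First I would combine Lemma~\ref{lemadd5} and Lemma~\ref{main-lemma} into a clean parametrization. By Lemma~\ref{lemadd5}, up to equivalence we may restrict attention to varieties of the form $\cM_{\alpha,\varepsilon}$, where $\varepsilon$ is a fixed primitive element of $\GF(q^2)$. By Lemma~\ref{main-lemma}, two such varieties are equivalent precisely when the associated scalars $\delta(\alpha)=(\varepsilon^q-\varepsilon)^2/(4\alpha^{q+1})$ lie in the same orbit under $\mathrm{Aut}(\GF(q^2))$. Next I would check that as $\alpha$ ranges over the admissible values, $\delta(\alpha)$ ranges precisely over $\GF(q)\setminus\{0,-1\}$: it is an element of $\GF(q)$ because both $(\varepsilon^q-\varepsilon)^2$ and $\alpha^{q+1}$ are fixed by Frobenius; it is nonzero since $\varepsilon\notin\GF(q)$; and it differs from $-1$ by the non\-singularity condition $4\alpha^{q+1}+(\varepsilon^q-\varepsilon)^2\neq 0$. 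Surjectivity onto $\GF(q)\setminus\{0,-1\}$ follows from the observation (already used in the proof of Lemma~\ref{main-lemma}) that $x\mapsto x^{q+1}$ is surjective $\GF(q^2)^*\to\GF(q)^*$.

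Now the counting problem becomes: determine the number of orbits of $\mathrm{Aut}(\GF(q^2))=\langle x\mapsto x^p\rangle$ (cyclic of order $2n$) on $\GF(q)\setminus\{0,-1\}$. Since every element of $\GF(q)$ is fixed by $x\mapsto x^q=x^{p^n}$, the action factors through the quotient $\mathrm{Aut}(\GF(q^2))/\langle x\mapsto x^q\rangle\cong\mathrm{Gal}(\GF(q)/\GF(p))$, a cyclic group $H=\langle\phi\rangle$ of order $n$ with $\phi:x\mapsto x^p$. Thus I only need to count $H$-orbits. The elements $0$ and $-1$ are in the prime field $\GF(p)$, hence are each fixed by every $\phi^k$ and form two singleton orbits. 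So it suffices to count $H$-orbits on all of $\GF(q)$ and subtract $2$.

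Applying the Cauchy--Frobenius lemma, the number of $H$-orbits on $\GF(q)$ equals
\[
\frac{1}{n}\sum_{k=0}^{n-1}|\mathrm{Fix}(\phi^k)|
=\frac{1}{n}\sum_{k=0}^{n-1}p^{\gcd(k,n)},
\]
using that $\mathrm{Fix}(\phi^k)=\{x\in\GF(q): x^{p^k}=x\}=\GF(p^{\gcd(k,n)})$. Grouping the indices $k\in\{0,1,\dots,n-1\}$ according to the value $d=\gcd(k,n)$ (there are exactly $\Phi(n/d)$ such $k$ for each divisor $d$ of $n$) rewrites this sum as
\[
\frac{1}{n}\sum_{d\mid n}\Phi\!\left(\frac{n}{d}\right)p^{d}.
\]
Subtracting $2$ for the singleton orbits $\{0\}$ and $\{-1\}$ yields the claimed formula for $N$.

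The only potentially delicate point is the verification that the $\mathrm{Aut}(\GF(q^2))$-action on the $\delta$-values coincides with the $\mathrm{Gal}(\GF(q)/\GF(p))$-action on $\GF(q)$; everything else is routine. This amounts to noting that two automorphisms $\sigma_1,\sigma_2$ of $\GF(q^2)$ differing by a power of $x\mapsto x^q$ act identically on $\GF(q)$, so they define the same equivalence via Lemma~\ref{main-lemma}, and conversely any automorphism of $\GF(q)$ lifts to an automorphism of $\GF(q^2)$. Once this is settled, Burnside's formula and the elementary divisor reindexing finish the proof.
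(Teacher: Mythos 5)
Your proof is correct. The reduction is exactly the paper's: restrict to $\cM_{\alpha,\varepsilon}$ via Lemma~\ref{lemadd5}, identify equivalence classes with orbits of the invariant $\delta(\alpha)\in\GF(q)\setminus\{0,-1\}$ under field automorphisms via Lemma~\ref{main-lemma}, and observe that the action of $\mathrm{Aut}(\GF(q^2))$ on $\delta$-values factors through $\mathrm{Gal}(\GF(q)/\GF(p))$ while $\delta$ attains every value in $\GF(q)\setminus\{0,-1\}$ (both points are already settled inside the proof of Lemma~\ref{main-lemma}). Where you genuinely diverge is the final count. The paper stratifies $\GF(q)\setminus\{0,-1\}$ by the exact subfield $\GF(p^e)$ generated by each element, notes that such an element has orbit length $e$, writes $N=\sum_{e\mid n}N_e/e$, recovers $N_e$ by M\"obius inversion from $\sum_{e'\mid e}N_{e'}=p^e-2$, and then reindexes the resulting double sum to produce the Euler-$\Phi$ coefficients. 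You instead apply the Cauchy--Frobenius lemma to the cyclic group $\langle\phi\rangle$ of order $n$, using $|\mathrm{Fix}(\phi^k)|=p^{\gcd(k,n)}$ and grouping the indices $k$ by $d=\gcd(k,n)$, which produces the coefficients $\Phi(n/d)$ in a single step. These are the two standard derivations of the same orbit-counting identity: yours is shorter and avoids the double-sum manipulation; the paper's yields as a byproduct the finer data $N_e$ (hence the number of classes whose $\delta$-invariant has degree exactly $e$ over $\GF(p)$). Both arguments are complete.
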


\begin{proof}
For all $\delta, \delta' \in \GF(q) \setminus \{0,-1\}$ write
$\delta \sim \delta'$ if and only if $\delta'=\delta^{\sigma}$ for some $\sigma \in \mathrm{Aut}(\GF(q^2))$. By Lemma~\ref{main-lemma}, $N$ is the number of inequivalent classes $[\delta]$  under $\sim$.
Let $N_e=|\{ \delta \in \GF(p^e) \setminus \{0,-1\}:$ $ \delta$ is not contained in any smaller subfield of $\GF(q)\}|$. We have
\[ N=\sum_{e|n}\frac{N_e}{e}. \]
Observing that
\[\sum_{e'|e}N_{e'}=p^e-2,\]
denote by $\mu(x)$ the M\"obius function.
Then,
M\"obius inversion gives
 \[N_e=\sum_{e'|e}\mu(e')p^{e/e'}-2\sum_{e'|e}\mu(e').\]
It follows that \[N=(\sum_{e|n}\frac{1}{e}\sum_{e'|e}\mu(e')p^{e/e'})-2.\]
Let $m=e/e'$ be a divisor of $n$, then the coefficient of $p^m$ is
\[ \frac{1}{n}\sum_{(e/m)|(n/m)}\mu\left(\frac{e}{m}\right)\frac{n/m}{e/m}=\frac{1}{n} \Phi\left(\frac{n}{m}\right)\]
and finally \[N=\frac{1}{n}\left(\sum_{k/n}\Phi(\frac{n}{k})p^{k}\right)-2.\]

\end{proof}

\end{document}